\numberwithin{equation}{section}
\numberwithin{figure}{section}
\theoremstyle{plain}
\newtheorem{thm}{\protect\theoremname}
\theoremstyle{plain}
\newtheorem{lem}[thm]{\protect\lemmaname}
\theoremstyle{remark}
\newtheorem{rem}[thm]{\protect\remarkname}
\theoremstyle{definition}
\newtheorem{defn}[thm]{\protect\definitionname}
\theoremstyle{remark}
\newtheorem*{rem*}{\protect\remarkname}
\theoremstyle{remark}
\newtheorem{claim}[thm]{\protect\claimname}
\newlist{casenv}{enumerate}{4}
\setlist[casenv]{leftmargin=*,align=left,widest={iiii}}
\setlist[casenv,1]{label={{\itshape\ \casename} \arabic*.},ref=\arabic*}
\setlist[casenv,2]{label={{\itshape\ \casename} \roman*.},ref=\roman*}
\setlist[casenv,3]{label={{\itshape\ \casename\ \alph*.}},ref=\alph*}
\setlist[casenv,4]{label={{\itshape\ \casename} \arabic*.},ref=\arabic*}
\theoremstyle{plain}
\newtheorem{cor}[thm]{\protect\corollaryname}
\providecommand{\casename}{Case}
\providecommand{\claimname}{Claim}
\providecommand{\corollaryname}{Corollary}
\providecommand{\definitionname}{Definition}
\providecommand{\lemmaname}{Lemma}
\providecommand{\remarkname}{Remark}
\providecommand{\theoremname}{Theorem}
\begin{document}
\global\long\def\fii{\varphi}%
\global\long\def\T{\mathbb{T}}%
\global\long\def\F{\mathbb{F}}%
\global\long\def\M{\mathcal{M}}%
\global\long\def\zfc{\mathrm{ZFC}}%
\global\long\def\N{\mathcal{N}}%
\global\long\def\p{\mathbb{P}}%
\global\long\def\q{\mathbb{Q}}%
\global\long\def\NN{\mathbb{N}}%
\global\long\def\RR{\mathbb{R}}%
\global\long\def\power{\mathcal{P}}%
\global\long\def\LL{\mathcal{L}}%
\global\long\def\K{\mathcal{K}}%
\global\long\def\A{\mathcal{A}}%

\global\long\def\hod{\mathrm{HOD}}%
\global\long\def\Def{\operatorname{Def}}%
\global\long\def\zfc{\mathrm{ZFC}}%
\global\long\def\im{\operatorname{Im}}%
\global\long\def\dom{\operatorname{Dom}}%
\global\long\def\rng{\operatorname{Range}}%
\global\long\def\ord{{\bf Ord}}%
\global\long\def\lp{\operatorname{lp}}%
\global\long\def\meas{\operatorname{meas}}%
\global\long\def\card{{\bf Card}}%
\global\long\def\lex{\mathrm{Lex}}%
\global\long\def\id{\operatorname{Id}}%
\global\long\def\ult{\operatorname{Ult}}%
\global\long\def\It{\operatorname{It}}%

\global\long\def\th#1{\mathrm{Th}\left(#1\right)}%
\global\long\def\crit#1{\mathrm{crit}\left(#1\right)}%
\global\long\def\cof#1{\mathrm{cf}\left(#1\right)}%
\global\long\def\fin{\power_{\mathrm{Fin}}}%
\global\long\def\cfq{Q^{\mathrm{cf}}}%
\global\long\def\otp{\operatorname{otp}}%
\global\long\def\fin#1{\left[#1\right]^{<\omega}}%
\global\long\def\psu#1#2{\prescript{#1}{}{#2}}%
\global\long\def\supp{\operatorname{supp}}%

\global\long\def\con{\subseteq}%
\global\long\def\til{,\dots,}%
\global\long\def\emp{\varnothing}%
\global\long\def\smin{\mathord{\smallsetminus}}%
\global\long\def\aa{\mathtt{aa}}%
\global\long\def\mets{\mathord{\upharpoonright}}%

\global\long\def\sdiff{\triangle}%
\global\long\def\po{\trianglelefteq}%
\global\long\def\spo{\vartriangleleft}%
\global\long\def\pdwn{\mathord{\downarrow}}%
\global\long\def\pup{\mathord{\uparrow}}%
\global\long\def\nec{\square}%
\global\long\def\pos{\lozenge}%
\global\long\def\necc{\square_{ccc}}%
\global\long\def\posc{\lozenge_{ccc}}%

\title[Absoluteness for the theory of $C_{<\lambda_{1}\til<\lambda_{n}}^{*}$]{Absoluteness for the theory of the inner model\linebreak{}
 constructed from finitely many cofinality quantifiers}
\author{Ur Ya'ar}
\address{Einstein Institute of Mathematics \newline Hebrew University of Jerusalem\newline
Edmond J. Safra Campus, Givat Ram \newline  Jerusalem 91904, ISRAEL}
\email{ur.yaar@mail.huji.ac.il}
\thanks{I would like to thank my advisor, Prof. Menachem Magidor, for his
guidance and support without which this work would not have been possible.}
\begin{abstract}
We prove that the theory of the models constructible using finitely
many cofinality quantifiers -- $C_{\lambda_{1}\til\lambda_{n}}^{*}$
and $C_{<\lambda_{1}\til<\lambda_{n}}^{*}$ for $\lambda_{1}\til\lambda_{n}$
regular cardinals -- is set-forcing absolute under the assumption
of class many Woodin cardinals, and is independent of the regular
cardinals used. Towards this goal we prove some properties of the
generic embedding induced from the stationary tower restricted to
$<\mu$-closed sets.
\end{abstract}
\subjclass[2020]{03E45 (Primary) 03E47, 03E55, 03E57  (Secondary)}
\keywords{Inner models, Cofinality logic, Woodin cardinal, Stationary tower, Generic Absoluteness}

\maketitle

\section{Introduction and preliminaries}

Following the general framework set by Kennedy, Magidor and V\"{a}\"{a}n\"{a}nen
 in \cite{IMEL} for inner models constructed from extended logics,
our aim is to investigate $C_{\lambda_{1}\til\lambda_{n}}^{*}$ and
$C_{<\lambda_{1}\til<\lambda_{n}}^{*}$ -- the models of sets constructible
using the logics $\LL(Q_{\lambda_{1}}^{\mathrm{cf}}\til Q_{\lambda_{n}}^{\mathrm{cf}})$
and $\LL(Q_{<\lambda_{1}}^{\mathrm{cf}}\til Q_{<\lambda_{n}}^{\mathrm{cf}})$
respectively, where $Q_{\lambda}^{\mathrm{cf}}$ (resp. $Q_{<\lambda}^{\mathrm{cf}}$)
is the quantifier asserting that an ordinal has cofinality $\lambda$
(resp. $<\lambda$) and $\lambda_{1}<\dots<\lambda_{n}$ are regular
cardinals. Kennedy, Magidor and V\"{a}\"{a}n\"{a}nen have proved that,
assuming the existence of a proper class of Woodin cardinals, the
theory of $C^{*}=C_{\omega}^{*}=C_{<\omega_{1}}^{*}$ is set-forcing
absolute, and equals the theory of $C_{<\kappa}^{*}$ for any regular
$\kappa$ \cite[theorem 5.18]{IMEL}. Our goal it to generalize this
theorem to $C_{<\lambda_{1}\til<\lambda_{n}}^{*}$ for any regular
uncountable $\lambda_{1}\til\lambda_{n}$, from which also the case
of $C_{\lambda_{1}\til\lambda_{n}}^{*}$ can be deduced. To obtain
this result, we use a variation of Woodin's stationary tower -- the
$<\mu$-closed stationary tower, introduced by Foreman and Magidor
in \cite{foreman-magidor}. This tower has the property that it does
not change the notion of ``being of cofinality $<\mu$''. We begin
by stating and proving some facts regarding this tower, and then prove
the main theorem. We end by showing that this method cannot be simply
pushed to the case of infinitely many cofinalities, which leaves open
the question of absoluteness for theory of this kind of models.

Our notation will mostly follow \cite{foreman-magidor}. A set $S\con\power(Y)$
is called \emph{stationary in $Y$} if for every algebra $\mathfrak{A}=\left\langle Y,f_{i}\right\rangle _{i\in\omega}$
there is $Z\in S$ such that $Z$ is a subalgebra of $\mathfrak{A}$.
The collection of non-stationary subsets of $Y$ is an ideal denoted
by $\mathrm{NS}(Y)$. If $S$ is stationary in $Y$ then we denote
the restriction of the non-stationary ideal to $S$ by $\mathrm{NS}(Y)\mets S$.
We say that a set $S$ is simply \emph{stationary} if it is stationary
in $\bigcup S$. If $\emp\ne X\con Y$, then for $S\con\power\left(Y\right)$
we define its projection to $X$ by $S\pdwn X=\left\{ Z\cap X\mid Z\in S\right\} $
and for $T\con\power\left(X\right)$ we define its lift to $Y$ by
$T\pup Y=\left\{ Z\in\power\left(Y\right)\mid Z\cap X\in T\right\} $
.

A sequence $\mathcal{I}=\left\langle \mathcal{I}_{\beta}\mid\beta<\kappa\right\rangle $
such that each $\mathcal{I}_{\beta}$ is an ideal on $\power(H(\beta))$
is called a \emph{tower of ideals} if for every $\alpha<\beta$, if
for any $S\con\power(H(\beta))$, $S\notin\mathcal{I}_{\beta}\implies S\pdwn H(\alpha)\notin\mathcal{I}_{\alpha}$
and for any $T\con\power(H(\alpha))$, $T\notin\mathcal{I}_{\alpha}\implies T\pup H(\beta)\notin\mathcal{I}_{\beta}$
(i.e. positive sets project/lift to positive sets). Any tower of ideals
$\mathcal{I}$ gives rise to a Boolean algebra $b(\mathcal{I})$ such
that a generic $G\con b(\mathcal{I})$ induces a generic embedding
$j_{G}:V\to Ult(V,G)\con V\left[G\right]$ (for more details see \cite{larson-stationary,foreman-magidor}).
The tower is called \emph{precipitous} if every such generic embedding
is well-founded. For a strongly inaccessible cardinal $\delta$, \emph{the
(full) stationary tower on $\delta$} is $\left\langle \mathrm{NS}(H(\beta))\mid\beta<\delta\right\rangle $,
and a \emph{restricted tower} is of the form $\left\langle \mathrm{NS}(H(\beta))\mets S_{\beta}\mid\beta<\delta\right\rangle $
where each $S_{\beta}$ is stationary in $H(\beta)$. The Boolean
algebra $b(\left\langle \mathrm{NS}(H(\beta))\mid\beta<\delta\right\rangle )$
of the full stationary tower on $\delta$ is forcing equivalent to
as the poset 
\[
\p_{<\delta}=\left\{ a\in V_{\delta}\mid a\,\text{is stationary in }\text{\ensuremath{\cup a}}\right\} 
\]
with $a\geq b$ iff $\cup a\con\cup b$ and $\forall Z\in b$ $Z\cup\left(\cup a\right)\in a$
(i.e. $b\pdwn\left(\cup a\right)\con a$). Restricted stationary towers
on $\delta$ usually correspond to subsets of $\p_{<\delta}$ with
the same order. We will not distinguish the two notions. 

Given a (perhaps restricted) tower on $\delta$ $\p$ and a generic
$G\in\p$, members of $Ult(V,G)$ can be represented as equivalence
classes $\left[f\right]_{G}$ for some $f:a\to V$ where $a\in G$.
For every $x\in V$ $j_{G}(x)$ is represented by the constant function
$c_{x}$ (on any $a\in G$), and every $\alpha<\delta$ is represented
by the function $Z\mapsto\otp(Z\cap\alpha)$ (cf. \cite{larson-stationary}).
\begin{lem}
\label{lem:inacc}Let $\delta$ be a Woodin cardinal and $\rho>\delta$
strongly inaccessible. Let $\p$ be a (perhaps restricted) precipitous
stationary tower on $\delta$, $G\con\p$ generic and $j:V\to M$
the derived embedding. Then:
\end{lem}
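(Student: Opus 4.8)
The plan is to factor the argument through $V[G]$ and then descend to $M$, using only that $\p$ is small relative to $\rho$ and that $M\con V[G]$; Woodinness of $\delta$ and precipitousness will be used only to guarantee that $j\colon V\to M$ is a well-founded (hence transitive-collapse) embedding.

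First I would bound the forcing. As recalled above, a (restricted) stationary tower on $\delta$ is forcing equivalent to a suborder of $\p_{<\delta}\con V_\delta$, so $|\p|\le|V_\delta|=\delta<\rho$ (recall $\delta$, being Woodin, is inaccessible). A forcing of size $\le\delta$ is $\delta^{+}$-c.c., hence preserves regularity of every regular cardinal $\ge\delta^{+}$; in particular $\rho$ remains regular in $V[G]$. For the strong-limit clause, for each $\gamma<\rho$ there are at most $\delta^{\delta\cdot\gamma}\le 2^{\delta\cdot\gamma}$ nice $\p$-names for subsets of $\gamma$, and $2^{\delta\cdot\gamma}<\rho$ because $\rho$ is a strong limit in $V$ and $\delta\cdot\gamma<\rho$; hence $\bigl(2^{\gamma}\bigr)^{V[G]}<\rho$ and $\rho$ remains a strong limit in $V[G]$. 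So $\rho$ is strongly inaccessible in $V[G]$.

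Next I would descend to $M$ using $M\con V[G]$. If $\rho$ failed to be a cardinal, or were singular, or failed to be a strong limit in $M$, then a witnessing bijection, cofinal map, or injection of $\rho$ into some $\power^{M}(\gamma)\con\power^{V[G]}(\gamma)$ would lie in $V[G]$ as well and contradict the strong inaccessibility of $\rho$ there; hence $\rho$ is strongly inaccessible in $M$, and $j(\rho)$ is strongly inaccessible in $M$ by elementarity of $j$. For any remaining clause concerning $j$ near $\rho$ — say that $j$ restricted to $\rho$ has range cofinal in $j(\rho)$, or a comparison of $V_\rho^{M}$ with $V_\rho^{V[G]}$ — I would read it off from the representation of members of $M$ as classes $[f]_{G}$ with $f\in V$, $\dom f\in G$, once more invoking $|\p|\le\delta<\rho$.

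The only point where one must be slightly careful — and the step I expect to be the main (modest) obstacle — is the strong-limit clause for $M$: since $M$ need not be closed under $\rho$-sequences, one cannot conclude $\power^{M}(\gamma)=\power^{V[G]}(\gamma)$ and must instead argue by contradiction inside $V[G]$ as indicated, noting that an injection of $\rho$ into $\power^{M}(\gamma)$ that exists in $M$ also exists in $V[G]$ and there lands inside $\power^{V[G]}(\gamma)$, a set of size $<\rho$. Everything else is a soft consequence of the size bound together with $M\con V[G]$.
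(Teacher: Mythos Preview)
The lemma's conclusions (listed immediately after the lemma environment) are: (1) $j(\rho)=\rho$; and (2) for \emph{every} ordinal $\eta$, the statements $V\vDash\cof{\eta}<\rho$, $V[G]\vDash\cof{\eta}<\rho$, and $M\vDash\cof{\eta}<\rho$ are equivalent. Your plan instead proves that $\rho$ remains strongly inaccessible in $V[G]$ and in $M$; that is correct but is not what is being claimed, and it does not by itself yield either conclusion.

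Part of your plan does line up with the paper. The counting via representatives $[f]_G$ with $f\in V$, $\dom f\in V_\delta$ is exactly how the paper proves (1): for $\eta<\rho$ there are fewer than $\rho$ such functions into $\eta$, so $j(\eta)<\rho$; and any $f:a\to\rho$ with $\left|a\right|<\delta<\rho$ is bounded, so $j(\rho)=\sup_{\eta<\rho}j(\eta)\le\rho$. Your $\delta^{+}$-c.c.\ observation is precisely the paper's (b)$\Rightarrow$(a) in part (2), and (c)$\Rightarrow$(b) is immediate from $M\con V[G]$.

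The genuine gap is (a)$\Rightarrow$(c): given an arbitrary ordinal $\eta$ (possibly far above $\rho$) with $\mathrm{cf}^{V}(\eta)=\lambda<\rho$, one must show $M\vDash\cof{\eta}<\rho$. Your ``descend to $M$ via $V[G]$'' strategy does not touch this: a short cofinal sequence exists in $V[G]$, but $M$ is only assumed well-founded, with no closure under sequences, so there is no reason such a sequence lies in $M$; and inaccessibility of $\rho$ in $M$ says nothing about $\cof{\eta}$ for $\eta\ne\rho$. The paper's argument is a genuine construction in $V$: fix $a\Vdash\mathrm{cf}^{M}(\eta)\ge\rho$, a representative $f_\eta$ for $\eta$, and a $V$-cofinal sequence $\langle x_\alpha\mid\alpha<\lambda\rangle$ in $\eta$; for each $b\le a$ and $\alpha<\lambda$ choose $f_{b,\alpha}$ representing $x_\alpha$ on $b$, and set $g(X)=\sup\{f_{b,\alpha}(Y):b\le a,\ \alpha<\lambda,\ Y\in b,\ Y\cap(\cup a)=X\}$. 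Each $g(X)$ is a sup of at most $\lambda\cdot\delta<\rho$ values, so $a\Vdash\cof{[g]}<j(\rho)=\rho$; but $[g]$ dominates every $x_\alpha$ and is $\le\eta$, hence (by the assumed large cofinality in $M$) strictly below $\eta$, contradicting cofinality of the sequence. This is the substantive content of the lemma, and nothing in your outline approaches it.
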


\begin{enumerate}
\item \label{enu:fix-inacc} $j\left(\rho\right)=\rho$.
\item \label{lem:Cof<inacc}For every ordinal $\eta$ the following are
equivalent:

\emph{(a) $V\vDash\cof{\eta}<\rho$ ~~~(b) $V\left[G\right]\vDash\cof{\eta}<\rho$
~~~(c) $M\vDash\cof{\eta}<\rho$.}

\end{enumerate}
\begin{proof}
For (1): If $\eta<\rho$ then every $x\in j\left(\eta\right)$ is
of the form $\left[f\right]_{G}$ for $f:X\to\eta$ where $X\in V_{\delta}$,
so there are at most $\psu{\delta}{\eta}<\rho$ many such functions,
hence $j(\eta)<\rho$. Similarly every $x\in j\left(\rho\right)$
is of the form $\left[f\right]_{G}$ for $f:X\to\rho$ where $X\in V_{\delta}$,
but now $\left|X\right|<\rho$ so $f$ is bounded by some $\eta<\rho$,
so $\left[f\right]_{G}<j\left(\eta\right)$. Hence $j(\rho)=\sup\left\{ j(\eta)\mid\eta<\rho\right\} \leq\rho$. 

For (2): (c)$\Rightarrow$(b) since a cofinal sequence in $M$ remains
so in $V\left[G\right]$. 

(b)$\Rightarrow$(a) since $\rho>\delta=\left|\p\right|$ so by $\delta^{+}$-c.c.
cofinalities $\geq\rho$ are preserved from $V$ to $V\left[G\right]$. 

For (a)$\Rightarrow$(c) assume towards contradiction that $\mathrm{cf}^{V}\left(\eta\right)=\lambda<\rho$
while $\mathrm{cf}^{M}\left(\eta\right)\geq\rho$. Let $a\in\p$ such
that $a\Vdash\mathrm{cf}^{M}\left(\eta\right)\geq\rho$ and $f_{\eta}:a\to V$
represent $\eta$ (without loss of generalisation $f_{\eta}\left(X\right)\leq\eta$
for every $X\in a$, since $\eta\leq j\left(\eta\right)$). Let $\left\langle x_{\alpha}\mid\alpha<\lambda\right\rangle \in V$
be cofinal in $\eta$. For $X\in a$, let $g\left(X\right)$ be the
supremum of all values $f\left(X\right)$ for some function $f\in V$
with domain $b\leq a$ which may represent some $x_{\alpha}$. More
precisely -- for each $b\leq a$ choose some $f_{b,\alpha}:b\to V$
such that $b\Vdash\left[f_{b,\alpha}\right]_{G}=\check{x}_{\alpha}$,
and let 
\[
g\left(X\right)=\sup\left\{ f_{b,\alpha}\left(Y\right)\mid b\leq a,\alpha<\lambda,Y\in b\text{ s.t. }Y\cap\left(\cup a\right)=X\right\} 
\]
For every $\alpha<\lambda$ there are at most $\delta$ such functions,
each with domain of size $<\delta$, so all-in-all there are at most
$\lambda\cdot\delta$ possible values. Hence $\cof{g\left(X\right)}\leq\lambda\cdot\delta<\rho$
(by strong inaccessibility) for all $X\in a$, so $a\Vdash\cof{\left[g\right]}<j(\rho)=\rho$\,.
Since for every $\alpha$ $a\Vdash\check{x}_{\alpha}<\left[f_{\eta}\right]_{G}$,
by the definition of $g$ we get that $\left[g\right]\leq\left[f_{\eta}\right]=\eta$,
and since $\mathrm{cf}^{M}\left(\eta\right)\geq\rho$\,, $\left[g\right]<\eta$\,.
But by the construction for every $b\leq a$ and $\alpha<\lambda$,
$b\Vdash\check{x}_{\alpha}=\left[f_{b,\alpha}\right]_{G}\leq\left[g\right]_{G}$,
so $a\Vdash\check{x}_{\alpha}\leq\left[g\right]_{G}$ for every $\alpha$,
so $\left\langle x_{\alpha}\mid\alpha<\lambda\right\rangle $ is bounded
bellow $\eta$, contradicting the assumption that it is cofinal in
$\eta$.
\end{proof}

\section{The $<\mu$-closed stationary tower}

In \cite{foreman-magidor} we have the following theorem (theorem
1.3):
\begin{thm}
Let $\delta$ Woodin, $\mu<\lambda<\delta$ regular, for every strong
limit $\gamma$, let \emph{
\begin{align*}
S_{\gamma} & =\left\{ Z\in\power_{\lambda}\left(H\left(\gamma\right)\right)\mid Z\cap\lambda\in\lambda\land Z\cap\gamma\text{ is }<\mu\text{ closed}\right\} \\
\mathrm{NS}\left(\lambda,\gamma\right) & =\text{non-stationary subsets of }\power_{\lambda}\left(H\left(\gamma\right)\right)\\
\mathcal{I} & =\left\{ \mathrm{NS}\left(\lambda,\gamma\right)\restriction S_{\gamma}\mid\gamma<\delta\right\} 
\end{align*}
}then $\mathcal{I}$ is a tower of ideals and $b\left(\mathcal{I}\right)$
is $\delta$-presaturated. 
\end{thm}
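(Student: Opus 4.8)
The statement asserts two things --- that $\mathcal{I}$ is a tower of ideals and that $b(\mathcal{I})$ is $\delta$-presaturated --- and I would prove them separately, the first by elementary-submodel combinatorics and the second by Woodin's presaturation method for the stationary tower adapted to the $<\mu$-closed supports. (Note $\delta$ inaccessible gives $b(\mathcal{I})\con V_\delta$.) For the tower-of-ideals part one checks: each $\mathrm{NS}(\lambda,\gamma)\restriction S_\gamma$ is a proper ideal, i.e. $S_\gamma$ is stationary in $\power_\lambda(H(\gamma))$; positive sets project to positive sets; positive sets lift to positive sets. Projection is routine: for strong limits $\gamma'<\gamma$, the trace $Z\cap H(\gamma')$ of any $Z\in S_\gamma$ again lies in $S_{\gamma'}$ (sups of bounded $<\mu$-sized subsets of $Z\cap\gamma'$ are computed the same whether one works below $\gamma'$ or below $\gamma$, and $Z\cap\lambda$ is unchanged), and subalgebras of an algebra on $H(\gamma')$ are traces of subalgebras of any extension of it to $H(\gamma)$; so $S\pdwn H(\gamma')$ is $\mathcal{I}_{\gamma'}$-positive whenever $S$ is $\mathcal{I}_\gamma$-positive.

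The content of the first part is the lifting axiom --- for stationary $T\con S_{\gamma'}$, that $\{Z\in S_\gamma : Z\cap H(\gamma')\in T\}$ is stationary --- which I would get, along with stationarity of $S_\gamma$, from a closing-off construction: given an algebra $\mathfrak{A}$ on $H(\gamma)$, pick a witness $W\in T$ (stationarity of $T$, applied to the algebra on $H(\gamma')$ induced by the restricted Skolem terms of $\mathfrak{A}$), and build a continuous $\in$-increasing chain $\langle Z_i\mid i\le\mu\rangle$ of subalgebras of $\mathfrak{A}$ of size $<\lambda$ with $Z_0\supseteq W$, $Z_i\cap\lambda\in\lambda$, $\langle Z_j\mid j\le i\rangle\in Z_{i+1}$, and with $Z_{i+1}$ absorbing every sup of a bounded $<\mu$-sized subset of $Z_i\cap\gamma$; put $Z=Z_\mu$. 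Regularity of $\mu$ makes $Z\cap\gamma$ genuinely $<\mu$-closed, and $Z\cap\lambda=W\cap\lambda\in\lambda$. The cardinality point that keeps the chain inside $\power_\lambda(H(\gamma))$ is that closing a set of ordinals under sups of its bounded $<\mu$-sized subsets adjoins only ordinals lying in its order-topological closure --- at most $|Z_i|$-many new points per step, hence $<\lambda$ in total. The delicate check is that the $H(\gamma')$-trace can be kept inside $T$ throughout (the sups landing below $\gamma'$ are already present since $W\in S_{\gamma'}$ is $<\mu$-closed, and the chain is arranged to respect $H(\gamma')$); dropping the $T$-constraint yields stationarity of $S_\gamma$.

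For $\delta$-presaturation --- meaning: for every $\eta<\delta$, every sequence $\vec A=\langle A_\xi\mid\xi<\eta\rangle$ of maximal antichains of $b(\mathcal{I})$, and every condition $p$, there is $q\le p$ compatible with fewer than $\delta$ members of each $A_\xi$ --- code $\vec A$ as a subset of $V_\delta$ and, using Woodinness of $\delta$, choose $\kappa<\delta$ with $\eta,\mu,\lambda,\mathrm{rank}(p)<\kappa$ that is $<\delta$-$\vec A$-strong: for every $\zeta<\delta$ there is elementary $j\colon V\to M$ with $\crit j=\kappa$, $j(\kappa)>\zeta$, $V_\zeta\con M$ and $j(\vec A)\cap V_\zeta=\vec A\cap V_\zeta$. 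Fix an inaccessible $\zeta<\delta$ large enough (in a suitable club, and so that $j[V_\zeta]\in M$ for the resulting $j$) and fix $j$. Then $j$ fixes $\eta,\mu,\lambda,p$ and each $A_\xi\cap V_\zeta$, and $j(b(\mathcal{I}))$ agrees with $b(\mathcal{I})$ on $V_\zeta$: here one uses that ``$Z\cap\gamma$ is $<\mu$-closed'' is a first-order property of the set of ordinals $Z\cap\gamma$, hence absolute between $V$ and $M$, so the supports $S_\gamma$ (for $\gamma<\zeta$) and stationarity of subsets of $\power_\lambda(H(\gamma))$ are computed the same in $M\supseteq V_\zeta$.

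The heart is to build, inside $M$, a condition $q\le j(p)=p$ of $j(b(\mathcal{I}))$ of rank $<\zeta$ that forces the generic $M$-filter for $j(b(\mathcal{I}))$ to contain the filter derived from $\sigma:=j[V_\zeta]$ --- equivalently, forces $j$ to lift to an elementary $V[G]\to M[G^+]$. This ``master condition'' can be taken to consist of $<\mu$-closed models end-extended by copies of $\sigma$; the $<\mu$-closure the support demands is available because $\sigma$ is the image of $V_\zeta$ under an embedding with critical point $\kappa>\mu$, and closing off (as in the first part) preserves this --- this is the genuinely new point over the classical full-tower argument. Given such $q$, pull it back: it lies in $b(\mathcal{I})\cap V_\zeta$ (the two posets agree there), $q\le p$, and $q$ forces $j(A_\xi)$ to be met by the generic within $V_\zeta$; since for the element $a_\xi$ of $G\cap A_\xi$ one has $j(a_\xi)\in G^+\cap j(A_\xi)$, while $a_\xi\notin V_\zeta$ would force $j(a_\xi)\notin V_\zeta$ (rank monotonicity of $j$), one concludes $q$ forces $G\cap A_\xi\con V_\zeta$, so $A_\xi\cap V_\zeta$ is predense below $q$ in $V$; and then any $a\in A_\xi$ compatible with $q$ lies in $A_\xi\cap V_\zeta$ (else $a\wedge q\le q$ is compatible with some $b\in A_\xi\cap V_\zeta$, forcing $a=b$ since $A_\xi$ is an antichain), so $|\{a\in A_\xi : a\text{ compatible with }q\}|\le|V_\zeta|<\delta$. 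I expect the main obstacle to be this master-condition step: simultaneously keeping $q$ of bounded rank, keeping it a legitimate condition of the $<\mu$-closed tower (not of the full tower), and verifying that it forces each $j(A_\xi)$ to be decided below $\zeta$ --- with the absoluteness of stationarity and of (pre)density between $V$ and $M$ for objects of rank $<\zeta$ (resting on $V_\zeta\con M$, $\zeta$ inaccessible) a pervasive secondary chore.
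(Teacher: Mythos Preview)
The paper does not prove this theorem at all: it is quoted verbatim as Theorem~1.3 of Foreman--Magidor and then used as a black box. So there is no ``paper's own proof'' to compare your proposal against; the paper's contribution begins with the \emph{next} theorem (properties of the generic embedding), for which it proves two supporting lemmas that are variants of Foreman--Magidor's Lemmas~1.6 and~1.7.

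On the merits of your sketch: the outline for presaturation is the standard strongness-reflection/master-condition route and is the right shape. The genuine soft spot is the lifting step. You want the final $Z$ to satisfy $Z\cap H(\gamma')=W$ (since $T$ is an arbitrary stationary subset of $S_{\gamma'}$, you cannot hope for anything weaker than equality of the trace). Your chain alternates ``close under Skolem functions of $\mathfrak{A}$'' with ``throw in sups of bounded $<\mu$-sequences''. The sups that land below $\gamma'$ are indeed already in $W$, but once you add a new sup $\sigma\in[\gamma',\gamma)$ and then close under the Skolem functions of $\mathfrak{A}$, terms with $\sigma$ as a parameter can produce \emph{new} elements of $H(\gamma')$, destroying $Z_i\cap H(\gamma')=W$. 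Your parenthetical ``the chain is arranged to respect $H(\gamma')$'' is exactly the point that needs an argument, and the naive arrangement does not give it. Foreman--Magidor handle this with the iterated $\bar F,\bar G$ expansion (the paper reproduces the $<\mu$-closed version of this as its Lemmas~5 and~6): the expansion is designed so that the Skolem hull in $\bar{\mathfrak{A}}$ of a $<\mu$-closed $x\subseteq H(\gamma')$ is automatically $<\mu$-closed up to $\gamma$ \emph{provided} its trace on $H(\gamma')$ stays equal to $x$, and the end-extension/trace condition can be checked separately. Your closing-off construction would need to be replaced by (or shown equivalent to) that machinery; as written it is a gap.
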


\begin{rem}
$b\left(\mathcal{I}\right)$ is forcing equivalent to $\p\left(\delta,\lambda,<\mu\text{-closed}\right)$,
namely 
\[
\left\{ a\in V_{\delta}\mid a\con\power_{\lambda}\left(\cup a\right)\text{ is stationary s.t. \ensuremath{\forall Z\in a}, \ensuremath{Z\cap\ord} is \ensuremath{<\mu}-closed}\land Z\cap\lambda\in\lambda\right\} 
\]
with the relation defined above, $b\leq a$ iff $\cup a\con\cup b$
and $b\pdwn\left(\cup a\right)\con a$.
\end{rem}

\begin{thm}
\label{thm:embedding-properties}If $G\con b\left(\mathcal{I}\right)$
is generic then the induced generic embedding $j:V\to M$ satisfies:
\end{thm}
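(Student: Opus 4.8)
The plan is to go through the listed clauses, separating the routine stationary-tower facts from the single new point, namely that neither the forcing $b(\mathcal{I})$ nor the embedding $j$ alters the predicate ``$\cof{\cdot}<\mu$''. For the routine clauses --- the values of $\crit j$, $j\mets\lambda$ and $j(\lambda)$, the inclusion $V_{\delta}\con M$, and the fact that $M$ is closed under $<\mu$-sequences from $V[G]$ --- I would follow the development of the full stationary tower in \cite{larson-stationary} and of its $<\mu$-closed variant in \cite{foreman-magidor}, observing that each step survives once one feeds in the two defining features of a condition $a\in b(\mathcal{I})$: every $Z\in a$ satisfies $Z\cap\lambda\in\lambda$, and every $Z\cap\ord$ is $<\mu$-closed. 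The first feature gives $j\mets\lambda=\id$ --- by the density of the conditions all of whose members contain a prescribed ordinal below $\lambda$ --- and $\crit j=\lambda$, since the function $Z\mapsto\otp(Z\cap\lambda)=Z\cap\lambda$ represents $\lambda$ while being strictly below the constant function $Z\mapsto\lambda$ that represents $j(\lambda)$ (as $Z\cap\lambda\in\lambda$); that $j(\lambda)\ge\delta$ and $V_{\delta}\con M$ then follows as in the full-tower case using that $\delta$ is Woodin. The second feature, together with the $\delta$-presaturation provided by Theorem~1.3 of \cite{foreman-magidor}, is what yields the $<\mu$-closure of $M$ in $V[G]$; I would quote this last statement from \cite{foreman-magidor}, its proof being a relativization to $\mu$ of the closure argument for the full tower and the real technical input of this part.

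The substantive clause concerns cofinalities. Fix an ordinal $\eta$ and consider (a) $V\vDash\cof{\eta}<\mu$, (b) $V[G]\vDash\cof{\eta}<\mu$, (c) $M\vDash\cof{\eta}<\mu$. Then (a)$\Rightarrow$(b) and (c)$\Rightarrow$(b) are immediate from $V\con V[G]$ and $M\con V[G]$, and (b)$\Rightarrow$(c) from the $<\mu$-closure of $M$ in $V[G]$: a cofinal map $\kappa\to\eta$ with $\kappa<\mu$ that lies in $V[G]$ already lies in $M$. So everything reduces to (c)$\Rightarrow$(a), i.e. to: $\mathrm{cf}^{V}(\eta)\ge\mu\implies\mathrm{cf}^{M}(\eta)\ge\mu$. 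It is enough to handle $\eta<\delta$ (the case $\eta\ge\delta$ reduces to this by passing to the regular cardinal $\mathrm{cf}^{V}(\eta)$ when that is $<\delta$, and to the $\delta^{+}$-c.c. of the forcing when it is $\ge\delta$, as in Lemma~\ref{lem:inacc}). So suppose $\mathrm{cf}^{V}(\eta)\ge\mu$ and, towards a contradiction, that a condition $a$ forces $\dot{s}\colon\kappa\to\eta$ to be cofinal in $\eta$ for some $\kappa<\mu$; fix $f$ with $a\Vdash[f]_{G}=\dot{s}$ and pass to the sub-condition of $a$ on which $f(Z)$ is literally a $\kappa$-sequence of ordinals below $\otp(Z\cap\eta)$. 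Writing $\pi_{Z}\colon Z\cap\eta\to\otp(Z\cap\eta)$ for the transitive collapse, $\pi_{Z}^{-1}\circ f(Z)$ is a $\kappa$-sequence with range inside $Z\cap\eta\con Z\cap\ord$; as $\kappa<\mu$ and $Z\cap\ord$ is $<\mu$-closed, its supremum $\sigma_{Z}$ lies in $Z\cap\ord$. Moreover $\sigma_{Z}\ne\eta$: otherwise $\eta$ is the supremum of a $\kappa$-sequence in $V$, contradicting $\mathrm{cf}^{V}(\eta)\ge\mu$. Hence $\sigma_{Z}\in Z\cap\eta$, so $\pi_{Z}(\sigma_{Z})<\otp(Z\cap\eta)$, and since $\pi_{Z}$ is order preserving, $f(Z)(i)\le\pi_{Z}(\sigma_{Z})$ for all $i<\kappa$. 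Therefore $Z\mapsto\pi_{Z}(\sigma_{Z})$ represents an ordinal $[h]_{G}<\eta$ with $a\Vdash\dot{s}(i)\le[h]_{G}$ for every $i<\kappa$, contradicting the cofinality of $\dot{s}$ in $\eta$. (Should the statement also record, for strongly inaccessible $\rho>\delta$, that $j(\rho)=\rho$ and that ``$\cof{\cdot}<\rho$'' is absolute among $V$, $V[G]$, $M$, this is immediate from Lemma~\ref{lem:inacc}, as $b(\mathcal{I})$ is $\delta$-presaturated, hence precipitous.)

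The one genuinely substantial ingredient, and hence the main obstacle, is the $<\mu$-closure of $M$ in $V[G]$: the argument for (c)$\Rightarrow$(a) leans on it (to pass from (b) to (c)) and on the $<\mu$-closure of the individual $Z\cap\ord$ (so that $\sigma_{Z}$ cannot escape $Z$), and if this closure cannot simply be cited from \cite{foreman-magidor} then reproving it --- by relativizing the full-tower closure argument and using the $<\mu$-closure of conditions at the critical step --- is where the real work is. Granting it, the only remaining care is routine bookkeeping: that the passages to sub-conditions above genuinely land in $b(\mathcal{I})$ (a stationary subset of a condition has the same union, and $<\mu$-closure and the property $Z\cap\lambda\in\lambda$ are preserved by shrinking), and that the reduction of arbitrary $\eta$ to the case $\eta<\delta$ is legitimate.
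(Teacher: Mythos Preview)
Your treatment of the cofinality clause (3)(b) is essentially the paper's own argument --- represent the alleged short cofinal sequence by a function $f$, pull back via the collapse, and use that each $Z\cap\ord$ is $<\mu$-closed to trap the supremum below $\otp(Z\cap\theta)$. That part is fine.

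The genuine gap is elsewhere: you list $j(\lambda)=\delta$ among the ``routine'' facts to be read off from the full-tower case, and you sketch only the easy inequality $j(\lambda)\ge\delta$. In the paper the opposite inequality $j(\lambda)\le\delta$ is by far the hardest step of the whole theorem. The standard full-tower argument runs as follows: given $f:a\to\lambda$ purportedly representing $\delta$, pick a measurable $\rho\in(\eta,\delta)$ and, for each $Z\in a$, iteratively end-extend $Z$ below $\rho$ (adding one new ordinal from $\bigcap(U\cap Z)$ and closing under Skolem functions) $f(Z)$ many times, so that the resulting $Z_*$ satisfies $\otp(Z_*\cap\rho)\ge f(Z_*\cap V_\eta)$. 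In the $<\mu$-closed tower this does \emph{not} go through verbatim: each successor step must produce a $<\mu$-closed substructure, and at limit stages of cofinality $<\mu$ the union of a chain of $<\mu$-closed substructures need not itself be $<\mu$-closed. The paper handles this with two dedicated lemmas (variations of \cite[Lemmas~1.6--1.7]{foreman-magidor}, restated for $<\mu$-closed rather than $\le\mu$-closed sets): one says that the Skolem hull in a suitably enriched structure $\bar{\mathfrak{A}}$ of a $<\mu$-closed set remains $<\mu$-closed provided it end-extends; the other says that a continuous increasing chain of such hulls has $<\mu$-closed union. Only with these in hand does the iterated end-extension yield a legitimate condition, and hence the contradiction.

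So you have the difficulty inverted. The closure of $M$ under $<\delta$-sequences (not just $<\mu$-sequences --- the theorem really gives $\psu{<\delta}M\cap V[G]\con M$) is the \emph{easy} consequence of $\delta$-presaturation and can indeed be cited; the hard, tower-specific work is pinning down $j(\lambda)=\delta$, and this in turn is what makes clause (3)(c) go through (since one needs $\delta$ to stay regular in $M$ and $V[G]$). Your proposal does not touch (3)(a) or (3)(c); (a) is immediate from $\crit j=\lambda$, but (c) --- and your own reduction ``$\eta\ge\delta$ reduces to $\eta<\delta$'' --- ultimately rests on $j(\lambda)=\delta$, which you have not established.
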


\begin{enumerate}
\item $M$ is well-founded and $\psu{<\delta}M\cap V\left[G\right]\con M$.
\item \label{enu:crit}$\crit j=\lambda$ and $j\left(\lambda\right)=\delta$.
\item \label{enu:cof-preserve}For any $\theta$, 
\begin{enumerate}
\item If $V\vDash\cof{\theta}<\lambda$ then $\mathrm{cf}^{V}\left(\theta\right)=\mathrm{cf}^{V\left[G\right]}\left(\theta\right)=\mathrm{cf}^{M}\left(\theta\right)$;
\item If $V\vDash\cof{\theta}\geq\mu$ then $V\left[G\right],M\vDash\cof{\theta}\geq\mu$
;
\item $V\vDash\cof{\theta}<\delta$ iff $V\left[G\right]\vDash\cof{\theta}<\delta$
iff $M\vDash\cof{\theta}<\delta$ .
\end{enumerate}
\end{enumerate}
To prove this theorem we introduce some notions defined in \cite{foreman-magidor}
and prove variations of two lemmas.
\begin{defn}
Fix regular cardinals $\mu$, $\lambda$ and $\theta\gg\lambda$,
and let $\mathfrak{A}=\left\langle H(\theta),\in,\triangle,\mu,f_{i}\right\rangle _{i\in\omega}$
be a Skolemized algebra on $H(\theta)$ where the $f_{i}$s are closed
under composition and $\triangle$ is a well-order. Define by recursion
a sequence $\left\langle \left(\mathfrak{A}_{i},F_{i},G_{i},F_{i}^{*},G_{i}^{*}\right)\mid i\leq\mu\right\rangle $
of functions and expansions of $\mathfrak{A}$ as follows. $\mathfrak{A}_{0}=\mathfrak{A}$,
$F_{0}^{*}=G_{0}^{*}=\emp$. For every $i$, we will define $\mathfrak{A}_{i}=\left\langle \mathfrak{A},F_{i}^{*},G_{i}^{*}\right\rangle $
and
\begin{eqnarray*}
F_{i}:\theta\times H(\theta)\to\theta &  & F_{i}(\xi,x)=\begin{cases}
\sup\left(\mathrm{Sk}^{\mathfrak{A}_{i}}(x\cap\theta)\right) & \xi=0\\
\sup\left(\mathrm{Sk}^{\mathfrak{A}_{i}}(x\cap\xi)\right) & \xi\ne0
\end{cases}\\
G_{i}:i\times H(\theta)\to H(\theta) &  & G_{i}(j,Z)=(G_{j}\mets Z,F_{i}\mets Z).
\end{eqnarray*}
At stage $i>0$ we define
\begin{eqnarray*}
F_{i}^{*}:i\times\theta\times H(\theta)\to\theta &  & F_{i}^{*}(j,\xi,x)=F_{j}(\xi,x)\\
G_{i}^{*}:i\times i\times H(\theta)\to H(\theta) &  & G_{i}^{*}(j,k,x)=\begin{cases}
G_{j}(k,x) & k<j\\
\emp & k\geq j
\end{cases}
\end{eqnarray*}
 Denote $\bar{F}=F_{\mu}^{*}$, $\bar{G}=G_{\mu}^{*}$, $\bar{\mathfrak{A}}=\mathfrak{A}_{\mu}=\left\langle \mathfrak{A},\bar{F},\bar{G}\right\rangle $.
\end{defn}

\begin{lem}[{Variation of \cite[Lemma 1.6]{foreman-magidor}}]
\label{lem:mu-closed-extension}Let $\gamma_{0}<\gamma_{1}$ be strong
limit cardinals of cofinality $\geq\mu$. Let $\mathfrak{A}$ be a
Skolemized algebra on $H(\gamma_{1})$ expanding $\left\langle H(\gamma_{1}),\in,\triangle,\left\{ \gamma_{0}\right\} ,\left\{ \mu\right\} \right\rangle $
where $\triangle$ is a well order on $H(\gamma_{1})$ . Then for
every $x\con H(\gamma_{0})$ which is $<\mu$-closed, if $\bar{x}\con H(\gamma_{1})$
is the subalgebra of $\bar{\mathfrak{A}}$ generated by $x$ and $\bar{x}\cap H(\gamma_{0})=x$
then $\bar{x}\cap\gamma_{1}$ is $<\mu$-closed in $\gamma_{1}$.
\begin{rem*}
The lemma in \cite{foreman-magidor} is stated for $\leq\mu$-closed
sets rather than for $<\mu$-closed sets.
\end{rem*}
\end{lem}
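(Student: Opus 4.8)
The plan is to unwind the definition of ``$<\mu$-closed'' and reduce to a single cofinality computation. It suffices to prove: for every limit $\delta<\gamma_{1}$ with $\cof{\delta}=\kappa<\mu$ such that $\bar{x}\cap\delta$ is cofinal in $\delta$, one has $\delta\in\bar{x}$. The case $\delta\leq\gamma_{0}$ is immediate: $\delta=\gamma_{0}$ is impossible, since otherwise $\bar{x}\cap\gamma_{0}=x\cap\gamma_{0}$ would be cofinal in $\gamma_{0}$ and hence $\cof{\gamma_{0}}=\kappa<\mu\leq\cof{\gamma_{0}}$; and if $\delta<\gamma_{0}$ then $\delta\in H(\gamma_{0})$, so $\bar{x}\cap\delta=\bar{x}\cap H(\gamma_{0})\cap\delta=x\cap\delta$ is cofinal in $\delta$, and the $<\mu$-closedness of $x$ gives $\delta\in x\con\bar{x}$. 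This is the only place the hypotheses $\bar{x}\cap H(\gamma_{0})=x$ and $\cof{\gamma_{0}}\geq\mu$ are used, and it is exactly here that replacing ``$\leq\mu$'' by ``$<\mu$'' lets us keep the weaker requirement $\cof{\gamma_{0}}\geq\mu$ rather than $\cof{\gamma_{0}}>\mu$.

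So assume $\gamma_{0}<\delta<\gamma_{1}$, $\cof{\delta}=\kappa<\mu$, and $\bar{x}\cap\delta$ cofinal in $\delta$; the goal is to exhibit $\delta$ as a value of a term of $\bar{\mathfrak{A}}$ with parameters in $x$. Following the proof of \cite[Lemma 1.6]{foreman-magidor}, the idea is to use that $\bar{x}$ is closed under every $F_{j}$ and every $G_{j}(k,\cdot)$ with $j,k<\mu$ --- these are basic functions of $\bar{\mathfrak{A}}$, obtained from $\bar{F}=F_{\mu}^{*}$ and $\bar{G}=G_{\mu}^{*}$ by substituting the constants $j,k\in\bar{x}$ --- in order to build, \emph{inside} $\bar{x}$, an increasing continuous chain $\langle Z_{\zeta}\mid\zeta\leq\kappa\rangle$ of subalgebras of $\bar{\mathfrak{A}}$ such that, for each $\zeta<\kappa$: $Z_{\zeta}\in\bar{x}$; $Z_{\zeta}$ is closed under $F_{j}$ and $G_{j}(k,\cdot)$ for all $j,k<\kappa$; $Z_{\zeta}\cap H(\gamma_{0})\con x$; and the sets $Z_{\zeta}\cap\delta$ increase with union cofinal in $\delta$ (this is where the cofinality of $\bar{x}\cap\delta$ in $\delta$ is spent). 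The functions $G_{i}$ are what makes such a chain constructible internally: since $G_{i}(j,Z)=(G_{j}\mets Z,F_{i}\mets Z)$, a subalgebra of $\bar{\mathfrak{A}}$ having $Z$ as an element also has the restricted algebras $\mathfrak{A}_{j}\mets Z$ as elements, so it can take Skolem hulls relative to them and iterate the construction. Because $\kappa<\mu$, the chain and its union $Z_{\kappa}=\bigcup_{\zeta<\kappa}Z_{\zeta}$ stay inside $\bar{x}$, and $Z_{\kappa}\cap\delta$ is cofinal in $\delta$; then applying $F_{\kappa}$ (or $F_{\kappa+1}$, still a basic function of $\bar{\mathfrak{A}}$ because $\kappa+1<\mu$) to $Z_{\kappa}$ with suitable parameters from $\bar{x}$ returns an ordinal which ought to come out equal to $\sup(Z_{\kappa}\cap\delta)=\delta$, whence $\delta\in\bar{x}$.

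The step I expect to be the crux is this last one: that closing off $Z_{\kappa}$ at level $\kappa$ returns $\delta$ itself rather than some ordinal above it. This amounts to showing that $\mathrm{Sk}^{\mathfrak{A}_{\kappa}}$ applied to the relevant cofinal-in-$\delta$ subset of $Z_{\kappa}$ stays inside $\delta$, which is precisely what the stratification of the $F_{i}$ is designed to force: each $Z_{\zeta}$, and hence $Z_{\kappa}$, is already closed under $F_{j}$ for $j<\kappa$ and under $\mathfrak{A}$ itself, so the $\kappa$-th Skolemization contributes no new ordinals below $\delta$. The second delicate point is maintaining $Z_{\zeta}\cap H(\gamma_{0})\con x$ along the chain --- so that the part of $Z_{\kappa}$ lying in $H(\gamma_{0})$ never escapes $x$ --- which is where the hypothesis $\bar{x}\cap H(\gamma_{0})=x$ re-enters and where a genuine choice in the construction is needed. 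Both are local modifications of the argument in \cite{foreman-magidor}, and the passage from $\leq\mu$-closed to $<\mu$-closed only simplifies matters here: every level of the recursion we invoke, namely $F_{\kappa}$ and $F_{\kappa+1}$, is genuinely among the basic functions of $\bar{\mathfrak{A}}$ precisely because $\kappa=\cof{\delta}<\mu$ is strict, whereas a $\leq\mu$-closed conclusion would also have to deal with $\cof{\delta}=\mu$, i.e.\ with the non-basic function $F_{\mu}$.
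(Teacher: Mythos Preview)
Your reformulation of $<\mu$-closedness and the handling of $\delta\leq\gamma_{0}$ are correct, but the main argument for $\gamma_{0}<\delta<\gamma_{1}$ has a genuine gap. You assert that $\bar{x}$ is closed under every $F_{j}$ with $j<\mu$, calling these ``basic functions of $\bar{\mathfrak{A}}$ obtained by substituting the constants $j,k\in\bar{x}$''. But that parenthetical is exactly the catch: $\bar{x}$ is a subalgebra, so it is closed under $F_{j}$ only for $j\in\bar{x}\cap\mu=x\cap\mu$, not for all $j<\mu$. When you then apply $F_{\kappa}$ with $\kappa=\cof{\delta}$, you need $\kappa\in\bar{x}$ --- and nothing guarantees this, since $\kappa$ is read off an ordinal $\delta$ not yet known to lie in $\bar{x}$, and an arbitrary $<\mu$-closed $x$ need not contain this particular regular cardinal. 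The same obstacle blocks building a chain $\langle Z_{\zeta}\mid\zeta\leq\kappa\rangle$ ``inside $\bar{x}$'': that requires the indices $\zeta<\kappa$ to be available in $\bar{x}$. Your final step is also underspecified: for $F_{\kappa}(\xi,Z_{\kappa})$ to return exactly $\delta$ you must name some $\xi\in\bar{x}$, and for any $\xi>\delta$ in $\bar{x}$ there is no evident reason the resulting supremum is $\delta$ rather than something larger below $\xi$.

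The paper's proof sidesteps all of this with a short minimality argument that uses only $\bar{F}$ (the $G$-functions play no role in this lemma). One takes the least $\xi\in\bar{x}\cup\{\gamma_{1}\}$ with $\bar{x}\cap\xi$ not $<\mu$-closed; elementarity of $\bar{x}$ forces $\xi$ to be regular and $>\gamma_{0}$. Each member $a_{\beta}$ of a witnessing $\nu$-sequence ($\nu<\mu$) is an $\bar{\mathfrak{A}}$-term in parameters from $x$, and the level-indices actually fed into $\bar{F},\bar{G}$ by that term all lie in $\bar{x}\cap\mu=x\cap\mu$; hence $a_{\beta}<\bar{F}(\rho_{\beta},\xi,\gamma_{0})$ for some $\rho_{\beta}\in x\cap\mu$. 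Now the $<\mu$-closedness of $x$ is spent --- not on $\kappa$, but on $\rho:=\sup_{\beta<\nu}\rho_{\beta}$, which lies in $x$ because the $\rho_{\beta}$ were already there. Since $\xi>\gamma_{0}$ is regular, $\bar{F}(\rho,\xi,\gamma_{0})\in\bar{x}\cap\xi$ bounds the whole sequence, contradicting its cofinality in $\bar{x}\cap\xi$. The idea you are missing is that the level index handed to $\bar{F}$ must itself be manufactured inside $\bar{x}$; the paper does this by reading the indices off the generating terms and closing under a short sup in $x$, rather than importing the external datum $\cof{\delta}$.
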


\begin{proof}
Assume $\bar{x}$ is not $<\mu$-closed and let $\xi\in\bar{x}$ be
least such that $\bar{x}\cap\xi$ is not $<\mu$ closed (or $\gamma_{1}$
if there is no such $\xi$). By assumption $\xi>\gamma_{0}$. So there
is some sequence $\left\langle a_{\beta}\mid\beta<\nu\right\rangle \con\bar{x}\cap\xi$
for some $\nu<\mu$ such that $\sup\left\langle a_{\beta}\mid\beta<\nu\right\rangle \notin\bar{x}\cap\xi$.
Note that by minimality, $\left\langle a_{\beta}\mid\beta<\nu\right\rangle $
is cofinal in $\bar{x}\cap\xi$. We first claim that $\xi$ is regular
(or $\gamma_{1}$). Otherwise, if $\xi\in\bar{x}$ and is singular,
since $\bar{x}\prec\left\langle H(\gamma_{1}),\in,\triangle\right\rangle $\emph{
}$\bar{x}$ knows that $\xi$ is singular, so there is some $\xi'\in\bar{x}$,
$\xi'<\xi$, and a sequence $\left\langle \xi_{\alpha}\mid\alpha<\xi'\right\rangle \in\bar{x}$
cofinal in $\bar{x}\cap\xi$. So both $\left\langle \xi_{\alpha}\mid\alpha<\xi'\right\rangle $
$\left\langle a_{\beta}\mid\beta<\nu\right\rangle $ are cofinal in
$\bar{x}\cap\xi$ so by letting $\alpha_{\beta}$ be the first $\alpha<\xi'$
such that $\xi_{\alpha}>a_{\beta}$ we get a sequence $\left\langle \alpha_{\beta}\mid\beta<\nu\right\rangle $
cofinal in $\bar{x}\cap\xi'$ by contradiction to the minimality of
$\xi$.

Second, let $\left\langle a_{\beta}\mid\beta<\nu\right\rangle \con\bar{x}\cap\xi$
for some $\nu<\mu$ such that $\sup\left\langle a_{\beta}\mid\beta<\nu\right\rangle \notin\bar{x}\cap\xi$.
For every $\beta<\nu$ there are an $\bar{\mathfrak{A}}$-term $\tau_{\beta}$
and parameters $\vec{\alpha}_{\beta}\in\left(x\cap\gamma_{0}\right)^{<\omega}$
such that $a_{\beta}=\tau_{\beta}(\vec{\alpha}_{\beta})$. By the
construction of $\bar{\mathfrak{A}}$, for every $\beta<\nu$ there
is some $\rho_{\beta}\in\text{\ensuremath{\mu\cap\bar{x}}}$ such
that $a_{\beta}=\tau_{\beta}\left(\vec{\alpha}_{\beta}\right)<\bar{F}(\rho_{\beta},\xi,\gamma_{0})$
(or $<\bar{F}(\rho_{\beta},0,\gamma_{0})$ if $\xi=\gamma_{1}$).
By assumption $\mu\cap\bar{x}=\mu\cap x$, and since $\nu<\mu$ and
$x$ is $<\mu$ closed, there is $\rho\in x$ such that for every
$\beta<\nu$, $a_{\beta}<\bar{F}(\rho,\xi,\gamma_{0})$ (or $<\bar{F}(\rho,0,\gamma_{0})$\,).
But since $\xi>\gamma_{0}>\mu$ and is regular, $\bar{F}(\rho,\xi,\gamma_{0})<\xi$
(or $\bar{F}(\rho,0,\gamma_{0})<\gamma_{1}$) in contradiction to
the minimality of $\xi$.
\end{proof}
\begin{lem}[{Variation of \cite[Lemma 1.7]{foreman-magidor}}]
\label{lem:union-mu-closed}Let $\theta$ be a regular cardinal $>\mu$.
Let $\mathfrak{A}$ be a Skolemized algebra on $H(\theta)$ expanding
$\left\langle H(\theta),\in,\triangle\right\rangle $. Let $\left\langle x_{\alpha}\mid\alpha<\eta\right\rangle $
be a continuous increasing sequence of $<\mu$-closed substructures
of $\bar{\mathfrak{A}}$ of cardinality $\geq\mu$ and let $\left\langle \gamma_{\alpha}\mid\alpha<\eta\right\rangle \con x_{0}\smin\mu$
be an increasing sequence of cardinals closed in $\gamma=\sup\left\langle \gamma_{\alpha}\mid\alpha<\eta\right\rangle $.
Suppose that for every $\alpha<\eta$, 
\begin{enumerate}
\item $x_{\alpha+1}$ is a $\gamma_{\alpha+1}$-end-extension of $x_{\alpha}$
i.e $x_{\alpha+1}\cap\sup(x_{\alpha}\cap\gamma_{\alpha+1})=x_{\alpha}\cap\gamma_{\alpha+1}$
,
\item $x_{\alpha}$is the substructure of $\bar{\mathfrak{A}}$ generated
by $x_{\alpha}\cap\gamma_{\alpha}$.
\end{enumerate}
Then $z=\bigcup_{\alpha<\eta}x_{\alpha}$ is $<\mu$-closed.

\end{lem}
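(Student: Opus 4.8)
The plan is to argue by contradiction via a least counterexample, in the spirit of the proof of Lemma~\ref{lem:mu-closed-extension} and of \cite[Lemma~1.7]{foreman-magidor}, splitting according to whether the offending supremum lies below or above $\gamma$. We may assume $\eta$ is a limit ordinal, for otherwise $z$ is just the top structure $x_{\eta-1}$, which is $<\mu$-closed by hypothesis. If $\cof\eta\geq\mu$ we are done at once: any increasing sequence from $z$ of length $\nu<\mu$ has its terms spread among structures $x_{\alpha_\beta}$ with $\sup_\beta\alpha_\beta<\eta$, hence all of them lie in a single $x_{\alpha^*}$, whose $<\mu$-closure places the supremum in $x_{\alpha^*}\con z$. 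So assume $\cof\eta<\mu$ and, towards a contradiction, that $z$ is not $<\mu$-closed. Let $\xi$ be least such that $z\cap\xi$ is not $<\mu$-closed; then $\xi$ is a limit ordinal with $\cof\xi<\mu$, and there is an increasing $\langle a_\beta\mid\beta<\nu\rangle$ (with $\nu<\mu$) from $z\cap\xi$ such that $\sup_\beta a_\beta=\xi\notin z$. For each $\beta$ pick $\alpha_\beta<\eta$ with $a_\beta\in x_{\alpha_\beta}$; since no single $x_\alpha$ can witness a failure of $<\mu$-closure, $\{\alpha_\beta\mid\beta<\nu\}$ is cofinal in $\eta$.

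The first substantial point is to describe $z$ below $\gamma$. Iterating hypothesis~(1), one proves by induction on $\alpha'\geq\alpha$ that $x_{\alpha'}\cap\gamma_\alpha=x_\alpha\cap\gamma_\alpha$: at a successor step this uses $\gamma_{\alpha'}\in x_0\con x_{\alpha'}$ and $\gamma_{\alpha'}<\gamma_{\alpha'+1}$, so $\sup(x_{\alpha'}\cap\gamma_{\alpha'+1})>\gamma_{\alpha'}\geq\gamma_\alpha$; limit steps are immediate from continuity. Hence $z\cap\gamma_\alpha=x_\alpha\cap\gamma_\alpha$ for every $\alpha<\eta$, and combining this with hypothesis~(2) ($x_\alpha=\mathrm{Sk}^{\bar{\mathfrak{A}}}(x_\alpha\cap\gamma_\alpha)$) and the fact that every finite subset of $z\cap\gamma$ is bounded below some $\gamma_\alpha$, we obtain $z=\mathrm{Sk}^{\bar{\mathfrak{A}}}(z\cap\gamma)$. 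Two more facts will be used. First, $\mu\con z$: if $\delta$ were the least ordinal in $\mu\smin z$, then $\delta$ would be a limit ordinal with $z\cap\delta=\delta$ cofinal in $\delta$ of cofinality $<\mu$, so $<\mu$-closure of $x_0$ would force $\delta\in x_0\con z$, a contradiction. Secondly, $\gamma\in z$: since $\cof\gamma=\cof\eta<\mu$, the cardinal $\gamma$ is the supremum of a subset of $\{\gamma_\alpha\mid\alpha<\eta\}\con x_0$ of size $<\mu$, hence lies in $x_0$ by its $<\mu$-closure. (This last point is exactly what is at stake for suprema at $\gamma$, as $\{\gamma_\alpha\mid\alpha<\eta\}$ is cofinal in $\gamma$.)

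Now split on $\xi$. If $\xi<\gamma$, then every $a_\beta$ lies below some fixed $\gamma_{\alpha^*}$ with $\alpha^*<\eta$, hence $a_\beta\in z\cap\gamma_{\alpha^*}=x_{\alpha^*}\cap\gamma_{\alpha^*}\con x_{\alpha^*}$, so $\xi=\sup_\beta a_\beta\in x_{\alpha^*}\con z$, contradiction; and $\xi=\gamma$ contradicts $\gamma\in z$. So $\xi>\gamma$, and, discarding an initial segment of the sequence, we may assume $a_\beta>\gamma$ for all $\beta$. Using $z=\mathrm{Sk}^{\bar{\mathfrak{A}}}(z\cap\gamma)$, write $a_\beta=\tau_\beta(\vec{p}_\beta)$ with $\vec{p}_\beta$ a finite tuple from $z\cap\gamma$. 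As in the proof of Lemma~\ref{lem:mu-closed-extension}, the construction of $\bar{\mathfrak{A}}$ provides $\rho_\beta<\mu$ with $a_\beta<\bar{F}(\rho_\beta,\xi,\gamma)$, and $\rho_\beta\in z$ since $\mu\con z$. Setting $\rho=\sup_{\beta<\nu}\rho_\beta<\mu$ (using regularity of $\mu$) and using monotonicity of $\bar{F}$ in its first coordinate, $a_\beta<\bar{F}(\rho,\xi,\gamma)$ for all $\beta$, whence $\xi\leq\bar{F}(\rho,\xi,\gamma)$; and since $\gamma<\xi$ this value is independent of its second argument, equals $\bar{F}(\rho,0,\gamma)$, and therefore belongs to $z$ (as $\rho,0,\gamma\in z$ and $z$ is closed under $\bar{F}$).

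It remains to turn the bound $\xi\leq\bar{F}(\rho,0,\gamma)\in z$ into the contradiction $\xi\in z$, and this is where the absoluteness and closure properties of the functions $\bar{F}$ are really used. Note first that $\xi$ cannot be a regular cardinal above $\gamma$: since $z=\mathrm{Sk}^{\bar{\mathfrak{A}}}(z\cap\gamma)$ has cardinality $\leq\gamma$, it is bounded in every regular cardinal $>\gamma$, contradicting that $z\cap\xi$ is cofinal in $\xi$; so $\xi$ is singular with $\cof\xi<\mu$. It then suffices to show $\xi$ is not a ``gap'' of $z$, i.e.\ that $\zeta_0:=\min(z\smin\xi)$ equals $\xi$. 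Suppose $\zeta_0>\xi$; then $\zeta_0$ is a limit ordinal in $z$ with $z\cap\zeta_0=z\cap\xi$ cofinal in $\xi<\zeta_0$. If $\cof{\zeta_0}<\mu$ this is already impossible, since $\mu\con z$ and elementarity give a cofinal map $\cof{\zeta_0}\to\zeta_0$ in $z$ whose range lies in $z\cap\zeta_0$ and is cofinal in $\zeta_0$, contradicting boundedness of $z\cap\zeta_0$ by $\xi$. The case $\cof{\zeta_0}\geq\mu$ is treated as in the proof of Lemma~\ref{lem:mu-closed-extension}, expressing the elements of $z$ lying just above $\gamma$ as $\bar{F}$-values and using the closure properties of $\bar{F}$ together with the $<\mu$-closure of $z$ below $\xi$; this is the step I expect to be the main obstacle. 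Once $\zeta_0=\xi$, we have $\xi\in z$, contradicting the choice of $\xi$. The rest of the proof is simply a reorganization of the argument of Lemma~\ref{lem:mu-closed-extension} around the sequence $\langle\gamma_\alpha\mid\alpha<\eta\rangle$ and the observation $\gamma\in z$.
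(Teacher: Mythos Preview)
Your approach diverges from the paper's and leaves a genuine gap that you yourself flag. You take $\xi$ to be the least ordinal witnessing failure of $<\mu$-closure, then split on $\xi\lessgtr\gamma$ and try to push $\xi$ into $z$ via $\zeta_0=\min(z\smin\xi)$; the subcase $\cof{\zeta_0}\geq\mu$ is never actually carried out. The paper avoids this entirely by a single observation you do not isolate: for any $\xi\in x_\alpha$ one has $\sup(z\cap\xi)=\sup(x_\alpha\cap\xi)$. The reason is that any $i\in z\cap\xi$ is $\tau(\vec{p})$ for some $\vec{p}\in x_\beta\cap\gamma_\beta$, hence $i<\bar{F}(\rho,\xi,\gamma)$ for some $\rho\in x_\beta\cap\mu$; but the end-extension hypothesis forces $x_\beta\cap\mu=x_0\cap\mu\subseteq x_\alpha$, so $\bar{F}(\rho,\xi,\gamma)\in x_\alpha\cap\xi$. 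Applied with $\xi=\theta$ and $\alpha=0$ this gives $\sup(z\cap\theta)=\sup(x_0\cap\theta)$, which has cofinality $\geq\mu$, so every short sequence from $z$ is automatically bounded by some element of $z$; one then takes $\xi$ to be least \emph{in $z$} with $z\cap\xi$ not closed and repeats the same computation to get $\cof{\sup(z\cap\xi)}\geq\mu$, a contradiction.

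Note that this same observation also patches your gap: your $\zeta_0$ lies in some $x_\alpha$, so $\sup(z\cap\zeta_0)=\sup(x_\alpha\cap\zeta_0)$ has cofinality $\geq\mu$, while $\sup(z\cap\zeta_0)=\sup(z\cap\xi)=\xi$ has cofinality $<\mu$; this handles both of your subcases on $\cof{\zeta_0}$ at once. So the missing ingredient is precisely the reduction of $\sup(z\cap\xi)$ to $\sup(x_\alpha\cap\xi)$ via the fact that the $\bar{F}$-indices $\rho$ all live in $x_0\cap\mu$. A minor additional point: your assertion ``$\xi\notin z$'' does not follow from ``$\xi$ least with $z\cap\xi$ not $<\mu$-closed'' (that condition is compatible with $\xi\in z$); you want $\xi$ least with $\xi\notin z$ and $\xi$ a sup of a short sequence from $z$.
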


\begin{proof}
Assume towards contradiction that $z$ is not $<\mu$ closed  and
let $\left\langle \beta_{\alpha}\mid\alpha<\nu\right\rangle $ witness
this. Note that there is no $\gamma$ such that $x_{\gamma}$ contains
unboundedly many $\beta_{\alpha}$s, since $x_{\gamma}$ is $<\mu$
closed, so we can find a strictly increasing subsequence $\left\langle \beta_{\alpha(\gamma)}\mid\gamma<\eta\right\rangle $,
hence $\cof{\nu}=\cof{\eta}$. So without loss of generalization we
just assume that in fact $\nu=\eta$, and is regular and $<\mu$.
Note that this implies $\gamma\in x_{0}$ as $x_{0}$ is $<\mu$-closed. 

We claim that there must be some $\xi\in z$ such that $z\cap\xi$
is not $<\mu$ closed. If $i\in z\cap\theta$, then there is some
$\alpha<\eta$, an $\bar{\mathfrak{A}}$ term $\tau$ and $\vec{\alpha}\in x_{\alpha}\cap\gamma_{\alpha}$
such that $i=\tau(\vec{\alpha})\in x_{\alpha}$. This means that for
some $\rho\in x_{\alpha}\cap\mu$, $i<\bar{F}(\rho,0,\gamma)$. But
since $\rho<\gamma_{0}<\gamma_{\alpha}$ and $x_{\alpha}$ is in particular
a $\gamma_{0}$-end extension of $x_{0}$, $\rho\in x_{0}$. So in
fact $i<\sup\left(x_{0}\cap\theta\right)$. So we've shown that $\sup(z\cap\theta)=\sup(x_{0}\cap\theta)$.
$x_{0}$ is $<\mu$ closed so $\cof{\sup(x_{0}\cap\theta)}\geq\mu$,
so every sequence in $z\cap\theta$ of cofinality $<\mu$ must be
bounded by some $\xi\in z\cap\theta$, and in particular any sequence
witnessing that $z\cap\theta$ is not $<\mu$ closed, will be bounded
by some $\xi\in z$.

Let $\xi$ be the minimal ordinal in $z$ such that $z\cap\xi$ is
not $<\mu$ closed. Note that $z\prec\left\langle H(\theta),\in,\triangle\right\rangle $
so as in the previous lemma $\xi$ must be regular. In $z\cap\xi$
we have a sequence $\left\langle \beta_{\alpha}\mid\alpha<\eta\right\rangle $
such that it's supremum is not in $z\cap\xi$. By minimality this
sequence is cofinal in $z\cap\xi$. But if $\xi\in x_{\alpha}$, then
as before for every $i\in z\cap\xi$ there is $\rho\in x_{\alpha}$
such that $i<F(\rho,\xi,\gamma)$, so $\sup(z\cap\xi)=\sup(x_{\alpha}\cap\xi)$
and since $x_{\alpha}$ is $<\mu$-closed this is of cofinality $\geq\mu$,
by contradiction.
\end{proof}
We are now ready to prove our theorem.
\begin{proof}[Proof of theorem \ref{thm:embedding-properties}]
(1) This is a standard consequence of presaturation, see e.g. \cite[section 9]{Foreman-handbook}. 

(2) As we noted earlier every ordinal $\gamma<\delta$ is represented
by the function $Z\mapsto\otp\left(Z\cap\gamma\right)$. By concentrating
on stationary sets of subsets of size $<\lambda$, $\otp\left(Z\cap\gamma\right)<\lambda$,
so $\gamma<j(\lambda)$ for every $\gamma<\delta$, hence $\delta\leq j(\lambda)$,
so in particular $\crit j\leq\lambda$\,.

Let $\eta<\lambda$. Let $b\in\p$. There is $a\in\p$, $a\leq b$,
such that $\eta\in\cup a$. The set of $Z\in\power_{\lambda}\left(\cup a\right)$
such that $\eta\in Z$ is club, so we may assume $\forall Z\in a$,
$\eta\in Z$. By the definition of $\p$, $Z\cap\lambda\in\lambda$
so in fact $\eta\in Z$ implies $Z\cap\eta=\eta$. So the set of $d$
such that $\forall Z\in d$ $Z\cap\eta=\eta$ is dense in $\p$. Since
$\eta$ is represented by $\otp\left(Z\cap\eta\right)$, every such
$d$ forces $\eta=j\left(\eta\right)$, so by genericity, this is
the case. Thus $\lambda$ must be the critical point.

Assume towards contradiction that $j(\lambda)>\delta$. This means
that $\delta$ is represented by a function $f$ with domain $a\in V_{\delta}$
such that $\left[f\right]<\left[c_{\lambda}\right]$, so we can assume
$f:a\to\lambda$. We can also assume $\cup a=V_{\eta}$ for some strongly
inaccessible $\eta<\delta$. Since every $\gamma<\delta$ is represented
by the function $g_{\gamma}:Z\mapsto\otp(Z\cap\gamma)$, and $\left[f\right]>\left[g_{\gamma}\right]$,
we know that for every $\gamma<\delta$ there is some $a_{\gamma}\leq a$
such that for every $Z\in a_{\gamma}$, $f(Z\cap V_{\eta})>\otp(Z\cap\gamma)$.
We want to get a contradiction by finding some $a'\in\p$, $a'\leq a$
such that for some $\gamma$ and every $Z\in a'$, $f(Z\cap V_{\eta})\leq\otp(Z\cap\gamma)$.

Let $\rho\in\left(\eta,\delta\right)$ be a measurable cardinal (exists
from Woodinness) and fix a measure $U$ on $\rho$. Now we use lemmas
\ref{lem:mu-closed-extension} and \ref{lem:union-mu-closed}: Choose
some strongly inaccessible $\theta\in\left(\rho,\delta\right)$ and
let $\bar{\mathfrak{A}}$ be a Skolemized expansion of $\left\langle H(\theta),\in,\triangle,U,\left\{ \rho\right\} ,\left\{ \mu\right\} \right\rangle $.
Let $Z\prec\bar{\mathfrak{A}}$ with $\left|Z\right|<\lambda$, $Z\cap\lambda\in\lambda$
and consider $A_{Z}=\bigcap\left(U\cap Z\right)$. Since $\left|Z\right|<\lambda$,
$A_{Z}\in U$, although $Z$ doesn't know this fact, and in fact $A_{Z}\cap Z=\emp$
since $Z$ satisfies that $U$ is non-principal, i.e. for every $\zeta\in Z$,
$Z\vDash\exists A\in U\left(\zeta\notin A\right)$ so $\zeta\notin A_{Z}$.
Now fix some $\zeta\in A_{Z}$ and some function $G_{Z}$ from $Z\cap\lambda$
onto $Z$, and let $\bar{Z}$ be the substructure of $\overline{\left\langle \mathfrak{A},\left\{ \zeta\right\} ,G_{Z}\right\rangle }$
generated by $Z$. Let $\rho_{Z}=\sup(Z\cap\rho)$.
\begin{claim}
$\bar{Z}\cap\rho_{Z}=Z\cap\rho_{Z}$.
\end{claim}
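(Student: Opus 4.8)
The plan is to prove the nontrivial inclusion $\bar{Z}\cap\rho_{Z}\con Z$, the other direction being immediate from $Z\con\bar{Z}$; equivalently, that adjoining $\zeta$ (and the surjection $G_{Z}$) and closing creates no new ordinals below $\rho_{Z}=\sup(Z\cap\rho)$. I would first record that $\zeta\geq\rho_{Z}$: for each $\gamma\in Z\cap\rho$ the tail $\{\xi<\rho\mid\gamma\leq\xi\}$ is an element of $Z$, and since $U$ is a nonprincipal $\rho$-complete, hence uniform, ultrafilter on $\rho$, this tail lies in $U$; thus it belongs to $U\cap Z$, so $\zeta\in\bigcap(U\cap Z)$ satisfies $\zeta\geq\gamma$, and taking the supremum over $\gamma$ gives $\zeta\geq\rho_{Z}$. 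Next I would argue that, for the purpose of computing the ordinals of $\bar{Z}$ below $\rho_{Z}$, the extra structure of $\overline{\langle\mathfrak{A},\{\zeta\},G_{Z}\rangle}$ is inert: the auxiliary $F$- and $G$-operations adjoined in forming that closure always take Skolem hulls containing the constant $\zeta$, so any ordinal an $F$-operation returns is $\geq\zeta\geq\rho_{Z}$, the $G$-operations return objects that are not ordinals, and $G_{Z}$ only returns elements of its range $Z$ (being applied only on its domain $Z\cap\lambda\con Z$). Granting this (see the final paragraph), every $\alpha\in\bar{Z}$ that is an ordinal $<\rho_{Z}$ lies in $W:=\mathrm{Sk}^{\mathfrak{A}}(Z\cup\{\zeta\})$, hence is of the form $\alpha=f(\zeta)$ where $f\colon\rho\to H(\theta)$ is given by $f(\xi)=t(\xi,\vec{a})$ for some $\mathfrak{A}$-term $t$ and $\vec{a}\in Z$; as $f$ is definable over $\bar{\mathfrak{A}}$ from $\vec{a}$ and $Z\prec\bar{\mathfrak{A}}$, we get $f\in Z$.

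It then suffices to prove: if $f\in Z$ has $\rho\con\dom(f)$ and $f(\zeta)$ is an ordinal $<\rho_{Z}$, then $f(\zeta)\in Z$. This is the heart of the matter, argued inside $\bar{\mathfrak{A}}$ using the $\rho$-completeness of $U$. First, there is $\gamma\in Z\cap\rho$ with $S_{\gamma}:=\{\xi<\rho\mid f(\xi)\in\gamma\}\in U$: otherwise, for every $\gamma\in Z\cap\rho$ the complement $\rho\smin S_{\gamma}$ lies in $U\cap Z$ and so contains $\zeta$, which (as $f(\zeta)$ is an ordinal) forces $f(\zeta)\geq\gamma$ for all such $\gamma$, i.e.\ $f(\zeta)\geq\rho_{Z}$, contradicting the assumption. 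Hence $\bar{\mathfrak{A}}$ satisfies that such a $\gamma$ exists (with $f$ as parameter), so by elementarity $Z$ contains the least such $\gamma^{*}$, and $\gamma^{*}<\rho$, $S_{\gamma^{*}}\in U\cap Z$. Then $\zeta\in S_{\gamma^{*}}$, so $f(\zeta)<\gamma^{*}<\rho$ and $f$ maps $S_{\gamma^{*}}$ into $\gamma^{*}$; by $\rho$-completeness of $U$, exactly one of the fewer than $\rho$ sets $\{\xi\in S_{\gamma^{*}}\mid f(\xi)=\beta\}$, $\beta<\gamma^{*}$, lies in $U$, say for $\beta=\beta^{*}$. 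Since $f,\gamma^{*},S_{\gamma^{*}},U\in Z$, elementarity gives $\beta^{*}\in Z$; and $\{\xi\in S_{\gamma^{*}}\mid f(\xi)=\beta^{*}\}\in U\cap Z$ contains $\zeta$, so $f(\zeta)=\beta^{*}\in Z$, as required.

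The step I expect to be delicate is the reduction in the first paragraph: verifying that an arbitrarily composed term $t(\zeta,\vec{a})$ with $\vec{a}\in Z$ whose value is an ordinal $<\rho_{Z}$ really does lie in $W=\mathrm{Sk}^{\mathfrak{A}}(Z\cup\{\zeta\})$, even when it routes through the large ordinals the $F$-operations produce or through $G_{Z}$. I would handle this by an induction on the stages of generation of $\bar{Z}$, maintaining that every element produced lies in the small elementary submodel $\mathrm{Sk}^{\bar{\mathfrak{A}}}(Z\cup\{\zeta,G_{Z}\})$, and invoking the three structural facts above ($F$-values $\geq\zeta\geq\rho_{Z}$, $G$-values non-ordinal, $\rng(G_{Z})=Z\supseteq\dom(G_{Z})$) to conclude that no such detour can produce an ordinal below $\rho_{Z}$ outside $Z$. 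The measure-theoretic core of the second paragraph is then the clean and essentially standard ingredient.
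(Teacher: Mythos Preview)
Your second paragraph is the heart of the matter and is correct; it is essentially the paper's argument. The paper writes $\tau=t(p,\zeta)$ for a Skolem term $t$ and $p\in Z^{<\omega}$, considers $\xi\mapsto t(p,\xi)$, and splits into three cases: constant on a $U$-set (then reflect to $Z$ and use $\zeta\in A_Z$); bounded below $\rho$ on a $U$-set (then regressive on a tail, hence constant by \emph{normality}, reducing to case~1); or unbounded, which is shown impossible by taking $\sigma=\min(Z\smin\tau)$ and noting $\zeta\in\{\xi\mid t(p,\xi)>\sigma\}$. Your version uses bare $\rho$-completeness instead of normality and organizes the contradiction slightly differently, but the content is the same.

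Where you differ is the reduction in your first and third paragraphs, and there the specific claim that ``any ordinal an $F$-operation returns is $\geq\zeta$'' is not right under the reading of $\bar F$ that makes Lemma~\ref{lem:mu-closed-extension} go through. That proof requires $\bar F(\rho,\xi,\gamma_0)<\xi$ whenever $\xi>\gamma_0$ is regular, so $F_i(\xi,x)$ is bounded \emph{above} by $\xi$; applying $\bar F'$ with second argument some $\xi<\rho_Z$ therefore can land strictly below $\rho_Z$, and your induction in the third paragraph does not prevent such values from then being fed into further Skolem functions. The paper simply does not attempt this reduction: it takes $t$ to be an arbitrary term of the ambient Skolemized structure and runs the measure argument directly on $\xi\mapsto t(p,\xi)$, relying on $Z\prec H(\theta)$ (with $U\in Z$) for the reflection. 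My suggestion is to drop the attempt to confine the term to $\mathrm{Sk}^{\mathfrak A}(Z\cup\{\zeta\})$ and argue as the paper does; the observation you do need---and which you correctly isolate---is that applications of $G_Z$ always return elements of $Z$, so by replacing each $G_Z$-subterm by its value one may take $t$ to be a term not mentioning $G_Z$, with possibly more parameters from $Z$.
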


\begin{proof}
Let $t$ be some Skolem term, $p\in Z^{<\omega}$ such that $\tau=t\left(p,\zeta\right)\in\bar{Z}\cap\rho_{Z}$.
 Consider the function $\xi\mapsto t(p,\xi)$ for $\xi<\rho$. We
have two cases
\begin{casenv}
\item The function is constant on some $A\in U$. So $H(\theta)\vDash\exists A\in U\forall\xi,\xi'\in A\left(t(p,\xi)=t(p,\xi')\right)$.
$Z\prec H(\theta)$ so there is such $A\in Z$. But then by the definition
of $A_{Z}$, $\zeta\in A$, so for some (any) $\xi\in A\cap Z$, $t(p,\xi)=t(p,\zeta)=\tau$
so $\tau\in Z$.
\item There is $\sigma<\rho$ such that $\left\{ \xi<\rho\mid t(p,\xi)\leq\sigma\right\} \in U$.
Then the function is regressive on the interval $\left(\sigma,\rho\right)\in U$
so by normality it is constant on some set in $U$, so we are back
to the previous case.
\item For every $\sigma<\rho$ we'd get $\left\{ \xi<\rho\mid t(p,\xi)>\sigma\right\} \in U$
. So this holds also in $Z$. In particular, if $\sigma=\min(Z\smin\tau)$,
we have $\left\{ \xi<\rho\mid t(p,\xi)>\sigma\right\} \in U\cap Z$
so $\zeta\in\left\{ \xi<\rho\mid t(p,\xi)>\sigma\right\} $, i.e.
$t(\rho,\zeta)=\tau>\sigma$, by contradiction to the choice of $\sigma$.
\end{casenv}
So the first case must hold, and we have $\tau\in Z$ as required.\qedhere

\end{proof}
Let $Z_{0}=Z$, and for every $\alpha\leq f(Z\cap V_{\eta})$, if
$Z_{\alpha}$ is defined and is, by induction, $<\mu$-closed, denote
$\rho_{\alpha}=Z_{\alpha}\cap\rho$, $Z_{\alpha}'=Z_{\alpha}\cap H(\rho_{\alpha})$,
$A_{Z_{\alpha}}=\bigcap\left(U\cap Z_{\alpha}\right)$, choose a function
$G_{Z_{\alpha}}:Z_{\alpha}\cap\lambda\twoheadrightarrow Z_{\alpha}$,
fix some $\zeta_{\alpha}\in A_{Z_{\alpha}}$ of cardinality $>\rho_{\alpha}$,
and let $Z_{\alpha+1}$ be the substructure of $\overline{\left\langle \mathfrak{A},\left\{ \zeta_{\alpha}\right\} ,G_{Z_{\alpha}}\right\rangle }$
generated by $Z_{\alpha}'$ . By the claim we have $Z_{\alpha+1}\cap\rho_{\alpha}=Z_{\alpha}\cap\rho_{\alpha}$,
in particular $Z_{\alpha+1}\cap H(\rho_{\alpha})=Z_{\alpha}\cap H(\rho_{\alpha})$,
so by lemma \ref{lem:mu-closed-extension} $Z_{\alpha+1}$ is $<\mu$
closed. Note that by including the predicate $G_{Z_{\alpha}}$ we
have $Z_{\alpha}\con Z_{\alpha+1}$. At limit stages we use lemma
\ref{lem:union-mu-closed} to get that $Z_{\alpha}=\bigcup_{\beta<\alpha}Z_{\beta}$
is $<\mu$ closed. Note that at every stage we have that $Z_{\alpha}\cap V_{\eta}=Z\cap V_{\eta}$.
So we get, after $f(Z\cap V_{\eta})$ stages, a set $Z_{*}=Z_{f(Z\cap V_{\eta})}$,
$Z_{*}\prec\bar{\mathfrak{A}}$, which is $<\mu$ closed, $Z_{*}\cap V_{\eta}=Z\cap V_{\eta}\in a$,
and $\otp(Z_{*}\cap\rho)\geq f(Z_{*}\cap V_{\eta})=f(Z\cap V_{\eta})$.

To conclude, what we have shown is that the set of all $Z\prec H(\theta)$
which are $<\mu$ closed, $\left|Z\right|<\lambda$, $Z\cap(\bigcup a)\in a$
and $\otp(Z\cap\rho)\geq f(Z\cap(\cup a))$ is stationary, which is
a contradiction. So $j(\lambda)=\delta$.

For (3), first note that since $\psu{<\delta}M\cap V\left[G\right]\con M$,
if $\mathrm{cf}^{V\left[G\right]}\left(\theta\right)<\delta$ or $\mathrm{cf}^{M}\left(\theta\right)<\delta$
then $M$ and $V\left[G\right]$ agree on this cofinality.

(a) If $\theta<\lambda$ is regular in $V$, then since $\lambda=\crit j$,
$\theta=j(\theta)$ remains regular in $M$, and is $<\delta$, so
it is also regular in $V\left[G\right]$. So regular cardinals $<\lambda$
from $V$ are preserved in $M$ and $V\left[G\right]$, hence all
cofinalities $<\lambda$ are preserved. 

(b) Assume $V\left[G\right],M\vDash\cof{\theta}<\mu$ ($\mu<\delta$
so if one satisfies this, so does the other). The size of the forcing
is $\delta>\mu$ and $\delta$ remains regular (since $j\left(\lambda\right)=\delta$
is regular in $M$, and if it were singular in $V\left[G\right]$,
then by closure of $M$ under $<\delta$-sequences it would have been
singular in $M$ as well) so we must have $V\vDash\cof{\theta}<\delta$.
By (a) if $\mu\leq\mathrm{cf}^{V}(\theta)<\lambda$ then it is preserved,
so we may assume (towards contradiction) that $\lambda\leq\mathrm{cf}^{V}\left(\theta\right)<\delta$,
and we can also assume $\mathrm{cf}^{V}(\theta)=\theta$. Let $a\Vdash M\vDash\cof{\theta}=\beta<\mu$.
This means that there is a function $f$ with domain $a$ such that
$a\Vdash$ ``$\left[f\right]$ is a sequence of length $\beta$ cofinal
in $\theta$''. $\beta<\crit j$ and $\theta$ is represented by
$\otp\left(\cdot\cap\theta\right)$ so without loss of generalization,
for every $Z\in a$ $f\left(Z\right)$ is a sequence of length $\beta$
cofinal in $\otp\left(Z\cap\theta\right)$. But $Z\cap\theta$ is
$<\mu$-closed and $\beta<\mu\leq\theta$ so it cannot have a cofinal
sequence of length $\beta$, a contradiction.

(c) $V\vDash\cof{\alpha}<\delta$ and $M\vDash\cof{\alpha}<\delta$
both imply $V\left[G\right]\vDash\cof{\alpha}<\delta$ since a cofinal
sequence from $V$ or $M$ will remain so in $V\left[G\right]$. $V\left[G\right]\vDash\cof{\alpha}<\delta$
implies $M\vDash\cof{\alpha}<\delta$ since $\psu{<\delta}M\cap V\left[G\right]\con M$.
Assume $V\left[G\right]\vDash\cof{\alpha}<\delta$. $\left|\p\right|=\delta$,
so it preserves cofinalities $>\delta$, so we must have $V\vDash\cof{\alpha}\leq\delta$.
If $V\vDash\cof{\alpha}=\delta$ while $V\left[G\right]\vDash\cof{\alpha}<\delta$,
it means that $\delta$ is not regular in $V\left[G\right]$ -- in
$V$ there is a $\delta$ sequence cofinal in $\alpha$, and in $V\left[G\right]$
there is also a shorter sequence cofinal in $\alpha$, which can be
pulled back to a cofinal sequence in $\delta$. But since $\psu{<\delta}M\cap V\left[G\right]\con M$
this means that $\delta$ is not regular in $M$ as well, contradicting
$j(\lambda)=\delta$ for $\lambda$ regular and $j$ elementary. So
we must have $V\vDash\cof{\alpha}<\delta$.
\end{proof}

\section{The main theorem}
\begin{thm}
Let $\lambda_{1}<\dots<\lambda_{n}<\delta_{1}<\dots<\delta_{n}$ such
that the $\lambda_{i}$s are regular uncountable cardinals and the
$\delta_{i}$s are Woodin cardinals. Then $C_{<\lambda_{1},\dots,<\lambda_{n}}^{*}\equiv C_{<\delta_{1},\dots,<\delta_{n}}^{*}$.
\end{thm}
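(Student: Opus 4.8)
The plan is to move from $C_{<\lambda_{1}\til<\lambda_{n}}^{*}$ to $C_{<\delta_{1}\til<\delta_{n}}^{*}$ in $n$ steps, the $k$-th of which --- run for $k=n,n-1\til1$ --- replaces the regular cardinal $\lambda_{k}$ by the Woodin cardinal $\delta_{k}$. Concretely, step $k$ will show
\[
\th{\left(C_{<\lambda_{1}\til<\lambda_{k},<\delta_{k+1}\til<\delta_{n}}^{*}\right)^{V}}=\th{\left(C_{<\lambda_{1}\til<\lambda_{k-1},<\delta_{k}\til<\delta_{n}}^{*}\right)^{V}}
\]
(the first subscript is $<\lambda_{1}\til<\lambda_{n}$ when $k=n$, the second is $<\delta_{1}\til<\delta_{n}$ when $k=1$), and chaining the $n$ equalities gives the theorem. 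Step $k$ will be extracted from a generic embedding for a $<\mu_{k}$-closed stationary tower on $\delta_{k}$, where I take $\mu_{k}=\lambda_{k-1}$ for $k\geq2$ and, for $k=1$, any regular $\mu_{1}<\lambda_{1}$ --- the sub-case $\lambda_{1}=\omega_{1}$, where necessarily $\mu_{1}=\omega$, being the ordinary countable-stationary-tower situation handled in \cite{IMEL}.

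Fix $k$. Since $\mu_{k}<\lambda_{k}<\delta_{k}$ are regular and $\delta_{k}$ is Woodin, let $G$ be generic for the $<\mu_{k}$-closed stationary tower on $\delta_{k}$ and $j:V\to M$ the induced embedding; by Theorem~\ref{thm:embedding-properties} we have $\crit j=\lambda_{k}$, $j(\lambda_{k})=\delta_{k}$ and $\psu{<\delta_{k}}M\cap V[G]\con M$. As $\lambda_{i}<\lambda_{k}=\crit j$ for $i<k$, $j$ fixes $\lambda_{1}\til\lambda_{k-1}$; and as each $\delta_{\ell}$ with $\ell>k$ is strongly inaccessible and $>\delta_{k}$, Lemma~\ref{lem:inacc}(\ref{enu:fix-inacc}) gives $j(\delta_{\ell})=\delta_{\ell}$. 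Since $C_{<\kappa_{1}\til<\kappa_{n}}^{*}$ is a class uniformly definable from regular cardinals $\kappa_{1}<\dots<\kappa_{n}$, elementarity of $j$ applied to the defining formula gives, for every sentence $\sigma$,
\[
V\vDash\bigl(C_{<\lambda_{1}\til<\lambda_{k},<\delta_{k+1}\til<\delta_{n}}^{*}\vDash\sigma\bigr)\iff M\vDash\bigl(C_{<\lambda_{1}\til<\lambda_{k-1},<\delta_{k}\til<\delta_{n}}^{*}\vDash\sigma\bigr),
\]
so the left-hand model (computed in $V$) and the right-hand model (computed in $M$) have the same theory.

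It remains to see that $C_{<\lambda_{1}\til<\lambda_{k-1},<\delta_{k}\til<\delta_{n}}^{*}$ is the same model whether built inside $M$, inside $V[G]$, or inside $V$. Since the construction of these inner models is absolute between models with the same ordinals that agree on ``$\mathrm{cf}(\eta)<\kappa$'' for all ordinals $\eta$ and all the cardinals $\kappa$ used as quantifier parameters (this is part of the general framework, cf. \cite{IMEL}), it suffices to check that $V$, $V[G]$ and $M$ agree on ``$\mathrm{cf}(\eta)<\kappa$'' for every ordinal $\eta$ and every $\kappa\in\{\lambda_{1}\til\lambda_{k-1},\delta_{k}\til\delta_{n}\}$. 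For $\kappa=\delta_{\ell}$ with $\ell>k$, this is Lemma~\ref{lem:inacc}(\ref{lem:Cof<inacc}) applied with the Woodin $\delta_{k}$ and the strongly inaccessible $\rho=\delta_{\ell}>\delta_{k}$. For $\kappa=\delta_{k}$ it is Theorem~\ref{thm:embedding-properties}(\ref{enu:cof-preserve})(c). For $\kappa=\lambda_{i}$, $i<k$: agreement between $M$ and $V[G]$ comes from $\psu{<\delta_{k}}M\cap V[G]\con M$ and $\lambda_{i}<\delta_{k}$; agreement between $V$ and $V[G]$ comes from Theorem~\ref{thm:embedding-properties}(\ref{enu:cof-preserve})(a) when $\mathrm{cf}^{V}(\eta)<\lambda_{k}$, and from its clause (b) otherwise, which with $\mu_{k}=\lambda_{k-1}\geq\lambda_{i}$ gives $\mathrm{cf}^{V[G]}(\eta)\geq\lambda_{k-1}$, so that ``$\mathrm{cf}(\eta)<\lambda_{i}$'' remains false on both sides. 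This completes step $k$ and hence the proof.

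The one point requiring care is the bookkeeping in the previous paragraph: at step $k$ no Woodin cardinal among $\delta_{k+1}\til\delta_{n}$ already introduced by earlier steps may be moved by the new embedding, and the cofinality quantifier ``$\mathrm{cf}(\cdot)<\delta_{\ell}$'' attached to each such $\delta_{\ell}$ may not be disturbed. I arrange this by running the steps in decreasing order of $\lambda$ and by consuming the Woodin cardinals in decreasing order as well, so that at step $k$ every $\delta_{\ell}$ with $\ell>k$ lies strictly above the Woodin $\delta_{k}$ supporting the current tower --- it is then fixed by Lemma~\ref{lem:inacc}(\ref{enu:fix-inacc}) and its cofinality quantifier is preserved by Lemma~\ref{lem:inacc}(\ref{lem:Cof<inacc}). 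The remaining ingredients --- the properties of the $<\mu$-closed tower (Theorem~\ref{thm:embedding-properties}) and the absoluteness of the $C^{*}$-construction under cofinality-preserving extensions (as in \cite{IMEL}, which also covers the $\lambda_{1}=\omega_{1}$ bottom step) --- are exactly what has been set up above or is standard.
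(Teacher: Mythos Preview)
Your proof is correct, but it is organized differently from the paper's. The paper builds one composite embedding: it iterates in \emph{increasing} order $i=1,\ldots,n$, defining $j_i:M_{i-1}\to M_i$ as the generic embedding for the tower $\p(\delta_i,\bar{j}_{i-1}(\lambda_i),<\delta_{i-1}^+\text{-closed})$ computed in $M_{i-1}$, and then sets $j=j_n\circ\cdots\circ j_1:V\to M_n$ with $j(\lambda_i)=\delta_i$ for all $i$ simultaneously; a single comparison then shows $(C_{<\delta_1,\ldots,<\delta_n}^{*})^{M_n}=(C_{<\delta_1,\ldots,<\delta_n}^{*})^{V}$. You instead run $n$ self-contained steps in \emph{decreasing} order, each taken over $V$ with a fresh tower on $\delta_k$ and closedness parameter $\mu_k=\lambda_{k-1}$, replacing one $\lambda_k$ by $\delta_k$ at a time and chaining equalities of theories. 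The decreasing order is exactly what makes your decomposition work: at step $k$ every already-introduced $\delta_\ell$ ($\ell>k$) sits above $\delta_k$ and is fixed by Lemma~\ref{lem:inacc}, while every remaining $\lambda_i$ ($i<k$) sits below the critical point. Your approach is lighter on bookkeeping---no tracking of images $\bar{j}_{i-1}(\lambda_i)$ through a tower of models, and no need to verify that the $\delta_\ell$ remain Woodin in the intermediate $M_{i}$'s, since every embedding is formed over $V$. The paper's approach in return produces a single elementary $j:V\to M_n$ moving all the $\lambda_i$ to the $\delta_i$ at once. Both routes rest on the same two ingredients (Lemma~\ref{lem:inacc} and Theorem~\ref{thm:embedding-properties}) and arrive at the same conclusion.
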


\begin{proof}
Let $M_{0}=V$ and take some $\delta_{0}$ regular $\lambda_{n}<\delta_{0}<\delta_{1}$.
We define by induction generic elementary embeddings $j_{i}:M_{i-1}\to M_{i}$
for $i=1\til n$. Given $j_{1}\til j_{i-1}$, set $\bar{j}_{i-1}=j_{i-1}\circ\dots\circ j_{1}$
(if $i=1$ $\bar{j}_{0}=\id$) and $\bar{\lambda}_{l}=\bar{j}_{i-1}\left(\lambda_{l}\right)$
for $l\leq i$ and we inductively require that
\begin{enumerate}[label=({\arabic*})]
\item $\crit{j_{i}}=\bar{\lambda}_{i}$ and $j_{i}\left(\bar{\lambda}_{i}\right)=\delta_{i}$;
\item For $k>i$, $j_{i}\left(\delta_{k}\right)=\delta_{k}$;
\item For every $l=1\til n$ and every $\alpha$, $V\vDash\cof{\alpha}<\delta_{l}$
$\iff$ $M_{i}\vDash\cof{\alpha}<\delta_{l}$;
\item $M_{i}\vDash\delta_{i+1}\til\delta_{n}$ are Woodin.
\end{enumerate}
Let $\p_{i}=\p\left(\delta_{i},\bar{\lambda}_{i},<\delta_{i-1}^{+}\text{-closed}\right)^{M_{i-1}}$
(the stationary tower on $\delta_{i}$ consisting of stationary sets
with $<\delta_{i-1}^{+}$-closed elements of size $\bar{\lambda}_{i}$,
as computed in $M_{i-1}$), $G_{i}\con\p_{i}$ an $M_{i-1}$-generic
filter and $j_{i}:M_{i-1}\to M_{i}$ the associated embedding, i.e
we use the previous theorem with $V=M_{i-1}$, $\mu=\delta_{i-1}^{+}$
and $\lambda=\bar{\lambda}_{i}$. By elementarity of $\bar{j}_{i-1}$,
$\bar{\lambda}_{i}$ is a regular uncountable cardinal in $M_{i-1}$
so (1) follows from theorem \ref{thm:embedding-properties}.\ref{enu:crit},
(2) follows from lemma \ref{lem:inacc}.\ref{enu:fix-inacc} and
(4) is a standard fact regarding stationary towers.

For (3), first for $l<i$ by the induction hypothesis we have for
every $\alpha$, $V\vDash\cof{\alpha}<\delta_{l}$ $\iff$ $M_{i-1}\vDash\cof{\alpha}<\delta_{l}$.
By the assumptions,
\[
\delta_{l}=j_{l}(\bar{\lambda}_{l})=j_{l}\circ j_{l-1}\circ\dots\circ j_{1}(\lambda_{l})=\bar{j}_{l}\left(\lambda_{l}\right)<\bar{j}_{l}\left(\lambda_{i}\right)\leq\bar{j}_{i-1}\left(\lambda_{i}\right)=\bar{\lambda}_{i}.
\]
By theorem \ref{thm:embedding-properties}.\ref{enu:cof-preserve}(a),
all cofinalities $<\bar{\lambda}_{i}$ are preserved from $M_{i-1}$
to $M_{i}$, so in particular we have $V\vDash\cof{\alpha}<\delta_{l}$
$\Rightarrow$ $M_{i}\vDash\cof{\alpha}<\delta_{l}$. If $M_{i}\vDash\cof{\alpha}<\delta_{l}$,
then in particular $M_{i}\vDash\cof{\alpha}<\delta_{i-1}^{+}$, so
by theorem \ref{thm:embedding-properties}.\ref{enu:cof-preserve}(b)
also $M_{i-1}\vDash\cof{\alpha}<\delta_{i-1}^{+}\leq\bar{\lambda}_{i}$,
so again by \ref{thm:embedding-properties}.\ref{enu:cof-preserve}(a)
this cofinality is preserved from $M_{i-1}$ to $M_{i}$ hence $M_{i-1}\vDash\cof{\alpha}<\delta_{l}$
and by the induction hypothesis we get $V\vDash\cof{\alpha}<\delta_{l}$.
The case $l>i$ is an application of lemma \ref{lem:inacc}.\ref{lem:Cof<inacc}
to the induction hypothesis. For $l=i$, by the induction hypothesis
we have $V\vDash\cof{\alpha}<\delta_{i}$ $\iff$ $M_{i-1}\vDash\cof{\alpha}<\delta_{i}$,
and by theorem \ref{thm:embedding-properties}.\ref{enu:cof-preserve}(c)
we get $M_{i-1}\vDash\cof{\alpha}<\delta_{i}$ $\iff$ $M_{i}\vDash\cof{\alpha}<\delta_{i}$.

Now we look at $j=j_{n}\circ\dots\circ j_{1}:V\to M_{n}$. By the
construction we get that for every $i=1\til n$, 
\begin{align*}
j\left(\lambda_{i}\right) & =j_{n}\circ\dots\circ j_{i+1}\circ j_{i}\circ\dots\circ j_{1}\left(\lambda_{i}\right)\\
 & =j_{n}\circ\dots\circ j_{i+1}\circ j_{i}\left(\bar{\lambda}_{i}\right)\\
 & =j_{n}\circ\dots\circ j_{i+1}\left(\delta_{i}\right)=\delta_{i}
\end{align*}
so if we restrict $j$ to $\left(C_{<\lambda_{1},\dots,<\lambda_{n}}^{*}\right)^{V}$
we get an elementary embedding 
\[
\bar{j}:\left(C_{<\lambda_{1},\dots,<\lambda_{n}}^{*}\right)^{V}\to\left(C_{<j\left(\lambda_{1}\right),\dots,<j\left(\lambda_{n}\right)}^{*}\right)^{M_{n}}=\left(C_{<\delta_{1},\dots,<\delta_{n}}^{*}\right)^{M_{n}}.
\]
 From the last step in the induction we obtain that for every $i=1\til n$
and ordinal $\alpha$, $V\vDash\cof{\alpha}<\delta_{l}$ $\iff$ $M_{n}\vDash\cof{\alpha}<\delta_{l}$,
and since both $V$ and $M_{n}$ are contained in $V\left[G_{1}\til G_{n}\right]$
-- the least model containing $V$ and $G_{1}\til G_{n}$ -- in
this model we get that the construction of $\left(C_{<\delta_{1},\dots,<\delta_{n}}^{*}\right)^{M_{n}}$
and $\left(C_{<\delta_{1},\dots,<\delta_{n}}^{*}\right)^{V}$ will
yield the same results at each step, so indeed 
\[
\left(C_{<\lambda_{1},\dots,<\lambda_{n}}^{*}\right)^{V}\equiv\left(C_{<\delta_{1},\dots,<\delta_{n}}^{*}\right)^{M_{n}}=\left(C_{<\delta_{1},\dots,<\delta_{n}}^{*}\right)^{V}.\qedhere
\]
\end{proof}
\begin{cor}
If there is a proper class of Woodin cardinals, and $\lambda_{1}<\dots<\lambda_{n}$
are regular uncountable cardinals, then the theory of $C_{<\lambda_{1},\dots,<\lambda_{n}}^{*}$
is set-forcing absolute and does not depend on $\lambda_{1}\til\lambda_{n}$,
in the following sense: if $\p$ is a set-forcing, $G\con\p$ generic
and $\lambda'_{1}<\dots<\lambda'_{n}$ are in $V\left[G\right]$ regular
cardinals, then $\left(C_{<\lambda_{1},\dots,<\lambda_{n}}^{*}\right)^{V}\equiv\left(C_{<\lambda'_{1},\dots,<\lambda'_{n}}^{*}\right)^{V\left[G\right]}$.
\end{cor}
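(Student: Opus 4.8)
The plan is to derive the corollary from the preceding theorem together with one elementary absoluteness remark. Fix a set-forcing $\p$, a generic $G\con\p$ and regular uncountable cardinals $\lambda'_{1}\til\lambda'_{n}$ of $V\left[G\right]$. Using the proper class of Woodin cardinals I would first pick Woodin cardinals $\delta_{1}<\dots<\delta_{n}$ of $V$ with $\left|\p\right|<\delta_{1}$ and $\lambda_{n},\lambda'_{n}<\delta_{1}$ (recall $\lambda'_{n}$ is an ordinal, so this is meaningful in $V$). Since $\left|\p\right|<\delta_{1}\le\delta_{i}$, forcing with $\p$ preserves the Woodinness of each $\delta_{i}$, so the hypotheses of the preceding theorem hold both for $\lambda_{1}\til\lambda_{n}<\delta_{1}\til\delta_{n}$ inside $V$ and for $\lambda'_{1}\til\lambda'_{n}<\delta_{1}\til\delta_{n}$ inside $V\left[G\right]$, yielding
\[
\left(C_{<\lambda_{1}\til<\lambda_{n}}^{*}\right)^{V}\equiv\left(C_{<\delta_{1}\til<\delta_{n}}^{*}\right)^{V}\qquad\text{and}\qquad\left(C_{<\lambda'_{1}\til<\lambda'_{n}}^{*}\right)^{V\left[G\right]}\equiv\left(C_{<\delta_{1}\til<\delta_{n}}^{*}\right)^{V\left[G\right]}.
\]

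It then suffices to prove the literal equality $\left(C_{<\delta_{1}\til<\delta_{n}}^{*}\right)^{V}=\left(C_{<\delta_{1}\til<\delta_{n}}^{*}\right)^{V\left[G\right]}$. For this I would invoke the standard fact that a forcing of size $<\delta_{1}$ preserves all cofinalities $\geq\left|\p\right|^{+}$, together with the fact that cofinalities of linear orders from $V$ can only drop in $V\left[G\right]$. Distinguishing the cases $\mathrm{cf}^{V}\left(\alpha\right)<\delta_{1}$, $\delta_{1}\le\mathrm{cf}^{V}\left(\alpha\right)<\delta_{i}$ and $\mathrm{cf}^{V}\left(\alpha\right)\geq\delta_{i}$, one checks that for every ordinal $\alpha$ and every $i\le n$, $V\vDash\cof{\alpha}<\delta_{i}$ iff $V\left[G\right]\vDash\cof{\alpha}<\delta_{i}$; the same then holds for cofinalities of arbitrary definable linear orders. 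Hence each quantifier $\cfq_{<\delta_{i}}$ receives the same interpretation over any structure in $V$ and in $V\left[G\right]$, and since $V\con V\left[G\right]$ while ordinary first-order satisfaction and the underlying $L$-like machinery are absolute, a routine induction on the levels of the hierarchy shows that the construction of $C_{<\delta_{1}\til<\delta_{n}}^{*}$ produces identical levels in $V$ and $V\left[G\right]$, hence the same model. Combining this with the two displayed equivalences gives $\left(C_{<\lambda_{1}\til<\lambda_{n}}^{*}\right)^{V}\equiv\left(C_{<\lambda'_{1}\til<\lambda'_{n}}^{*}\right)^{V\left[G\right]}$.

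Essentially all the content sits in the preceding theorem; the only part of this deduction needing attention is the cofinality bookkeeping of the second paragraph, namely that ``having cofinality $<\delta_{i}$'' is preserved in both directions by a forcing of size $<\delta_{1}$, and that this is the only feature of the universe that the construction of $C_{<\delta_{1}\til<\delta_{n}}^{*}$ can detect.
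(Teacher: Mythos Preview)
Your proposal is correct and follows essentially the same route as the paper: choose Woodins $\delta_{1}<\dots<\delta_{n}$ above $\left|\p\right|$, $\lambda_{n}$ and $\lambda'_{n}$, apply the main theorem in $V$ and in $V\left[G\right]$, and then use that a forcing of size $<\delta_{1}$ does not change ``cofinality $<\delta_{i}$'' to get the literal equality $\left(C_{<\delta_{1}\til<\delta_{n}}^{*}\right)^{V}=\left(C_{<\delta_{1}\til<\delta_{n}}^{*}\right)^{V\left[G\right]}$. Your second paragraph spells out the cofinality bookkeeping and the level-by-level induction more explicitly than the paper does, but there is no substantive difference.
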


\begin{proof}
Let $\p$ be some forcing notion, and $\delta_{n}>\dots>\delta_{1}>\max\left(\left|\p\right|,\lambda_{n},\lambda_{n}'\right)$
Woodin cardinals. Note that after forcing with a generic $G\con\p$,
they remain Woodin. So we can apply the theorem both in $V$ and $V\left[G\right]$
to obtain 
\begin{align*}
\left(C_{<\lambda_{1},\dots,<\lambda_{n}}^{*}\right)^{V} & \equiv\left(C_{<\delta_{1},\dots,<\delta_{n}}^{*}\right)^{V}\\
\left(C_{<\lambda'_{1},\dots,,\lambda'_{n}}^{*}\right)^{V\left[G\right]} & \equiv\left(C_{<\delta_{1},\dots,<\delta_{n}}^{*}\right)^{V\left[G\right]}
\end{align*}
but being of cofinality $<\delta_{i}$ is not affected by a forcing
of size $<\delta_{1}$, so $\left(C_{<\delta_{1},\dots,<\delta_{n}}^{*}\right)^{V}=\left(C_{<\delta_{1},\dots,<\delta_{n}}^{*}\right)^{V\left[G\right]}$,
and we get $\left(C_{<\lambda_{1},\dots,<\lambda_{n}}^{*}\right)^{V}\equiv\left(C_{<\lambda'_{1},\dots,<\lambda'_{n}}^{*}\right)^{V\left[G\right]}$.
\end{proof}
\begin{rem}
\begin{enumerate}
\item Note that $\cof{\alpha}=\omega\iff\cof{\alpha}<\omega_{1}$ so for
$\lambda_{1}=\omega_{1}$ we get the original result for $C_{\omega}^{*}$.
\item For any regular $\lambda$, $\cof{\alpha}=\lambda\iff\left(\cof{\alpha}<\lambda^{+}\land\neg\cof{\alpha}<\lambda\right)$.
For any regular $\lambda_{1}<\lambda_{2}$, denote by $C_{\left[\lambda_{1},\lambda_{2}\right)}^{*}$
the model constructed with the logic obtained from first order logic
by adding the quantifier $Q_{\left[\lambda_{1},\lambda_{2}\right)}^{\mathrm{cf}}(\alpha)\iff\cof{\alpha}\in\left[\lambda_{1},\lambda_{2}\right)$.
We can write $C_{\lambda}^{*}$ as $C_{\left[\lambda,\lambda^{+}\right)}^{*}$.
Now, if $\lambda$ is regular uncountable, the proof of the main theorem
using $\lambda,\lambda^{+}$ and Woodins $\delta_{1}<\delta_{2}$,
also gives us a generic elementary embedding
\[
\bar{j}:\left(C_{\left[\lambda,\lambda^{+}\right)}^{*}\right)^{V}\to\left(C_{\left[\delta_{1},\delta_{2}\right)}^{*}\right)^{V}
\]
so the proof of the corollary can be applied to obtain the absoluteness
of $Th\left(C_{\lambda}^{*}\right)$.
\item In general, we get absoluteness of $Th\left(C_{\lambda_{1}\til\lambda_{n}}^{*}\right)$
for any regular cardinals $\lambda_{i}$. 
\end{enumerate}
\end{rem}

\section{Open questions}

A natural question at this stage would be whether our results are
true also for infinitely many cofinality quantifiers. However, the
naïve approach of taking a direct limit of the above construction
and trying to prove similar results does not work, as cofinalities
are not presereved to the limit stage. Let $\vec{\lambda}=\left\langle \lambda_{n}\mid n<\omega\right\rangle $
be an increasing sequence of regular cardinals and $\left\langle \delta_{n}\mid n<\omega\right\rangle $
an increasing sequence of Woodin cardinals such that $\delta_{0}>\lambda_{*}:=\sup\left\langle \lambda_{n}\mid n<\omega\right\rangle $.
For every $i<\omega$ define by induction generic elementary embeddings
$j_{i,i+1}:M_{i}\to M_{i+1}\con V\left[G_{1}\til G_{i+1}\right]$
($M_{0}=V$) as in the proof of the main theorem (setting $j_{0,0}=\id$,
$j_{0,n+1}=j_{n,n+1}\circ j_{0,n}$\,, $G_{i+1}$ is $M_{i}$-generic
for the poset $\p\left(\delta_{i},j_{0,i}(\lambda_{i}),<\delta_{i-1}^{+}\text{-closed}\right)^{M_{i}}$).
We get that $j_{0,n+1}(\lambda_{n})=\delta_{n}$, for every $m\ne n+1$
$j_{0,m}(\delta_{n})=\delta_{n}$, and for every $k<\omega$ and $\eta\in\ord$,
$V\vDash\mathrm{cf}(\eta)<\delta_{k}$$\iff$$M_{n}\vDash\mathrm{cf}(\eta)<\delta_{k}$.
We can now take the direct limit and get embeddings $j_{n,\omega}:M_{n}\to M_{\omega}$
which are defined in the finite support iteration of the forcings
which we denote by $V\left[\mathcal{G}\right]$. So we get that for
every $n$ $j_{0,\omega}(\lambda_{n})=\delta_{n}$ and for every $m>n+1$
$j_{m,\omega}(\delta_{n})=\delta_{n}$. 

Denote for every $n\leq\omega$ $\theta_{n}:=j_{0,n}(\left(\lambda_{*}^{+}\right)^{V})$,
and consider $\theta_{\omega}$. First note that 
\[
j_{0,\omega}(\lambda_{*})=\sup j_{0,\omega}\left(\left\langle \lambda_{i}\mid i<\omega\right\rangle \right)=\sup\left\langle j_{0,\omega}\left(\lambda_{i}\right)\mid i<\omega\right\rangle =\sup\left\langle \delta_{i}\mid i<\omega\right\rangle 
\]
 so if we denote this by $\delta_{*}$ we get by elementarity that
$\theta_{\omega}=j_{0,\omega}(\lambda_{*}^{+})=\left(\delta_{*}^{+}\right)^{M_{\omega}}$.
So in particular $M_{\omega}\vDash\cof{\theta_{\omega}}=\theta_{\omega}>\delta_{*}$,
or equivalently (since $\delta_{*}=\sup\left\langle \delta_{n}\mid n<\omega\right\rangle $)
for every $n$ $M_{\omega}\vDash\cof{\theta_{\omega}}>\delta_{n}$
. If this were to hold also in $V$, then it would hold also in $V\left[\mathcal{G}\right]$,
since it is a forcing extension with a forcing of size at most $\delta_{*}$.
But in fact $V\left[\mathcal{G}\right]\vDash\cof{\theta_{\omega}}=\omega$:
$\theta_{\omega}=j_{n,\omega}(\theta_{n})$ for every $n$, and by
definition of the direct limit, every $\eta<\theta_{\omega}$ is of
the form $j_{n,\omega}(\bar{\eta})$ for some $n<\omega$ and $\bar{\eta}<\theta_{n}$,
so the sequence $\left\langle \sup j_{n,\omega}\left[\theta_{n}\right]\mid n<\omega\right\rangle $
is cofinal in $\theta_{\omega}$.  We claim that this sequence is
strictly increasing. For every $n$, 
\[
\theta_{n}=j_{0,n}(\lambda_{*}^{+})<j_{0,n}(\delta_{n})=\delta_{n}.
\]
$M_{n+1}$ is closed under $<\delta_{n}$ sequences in $M_{n}\left[G_{n}\right]$,
so $j_{n,n+1}\left[\theta_{n}\right]\in M_{n+1}$ and is of ordertype
$\theta_{n}<\delta_{n}$, while $\theta_{n+1}=j_{n,n+1}(\theta_{n})>j_{n,n+1}(\lambda_{n})=\delta_{n}$,
so $j_{n,n+1}\left[\theta_{n}\right]$ is bounded below $\theta_{n+1}$
so also $\sup j_{n,\omega}\left[\theta_{n}\right]$ is below $\sup j_{n+1,\omega}\left[\theta_{n+1}\right]$.
Hence in $V\left[\mathcal{G}\right]$ $\cof{\theta_{\omega}}=\omega$.

So to conclude, the question of absoluteness of the theory of the
model constructed with infinitely many cofinality quantifers remains
open.

\bibliographystyle{amsplain}
\bibliography{Bibliography}

\end{document}